\documentclass[a4paper, 11pt]{amsart}

\usepackage[T1]{fontenc}
\usepackage{amssymb,amscd}

\usepackage[all]{xy}

\usepackage[a4paper,hmargin=2.5cm,vmargin=2.5cm]{geometry}
\usepackage{todonotes}
\usepackage{mathrsfs}
\usepackage{mathtools}
\usepackage{graphicx}

\usepackage{xcolor}
\colorlet{NavyBlue}{blue!70!black!90}
\colorlet{PineGreen}{black!50!green}
\usepackage{tikz}

\usepackage{hyperref}
\hypersetup{colorlinks=true, linkcolor=blue, citecolor=black!50!green}

\usepackage{ytableau}
\usepackage{booktabs}
\usepackage{array}

\theoremstyle{plain}
\newtheorem{theorem}{Theorem}[subsection]
\newtheorem{lemma}[theorem]{Lemma}

\newtheorem{prop}[theorem]{Proposition}
\newtheorem{corollary}[theorem]{Corollary}

\theoremstyle{remark}

\newtheorem{remark}[theorem]{Remark}
\newtheorem{question}[theorem]{Question}

\theoremstyle{definition}
\newtheorem{example}[theorem]{Example}
\newtheorem{definition}[theorem]{Definition}
\newtheorem{construction}[theorem]{Construction}

\theoremstyle{plain}
\newtheorem{theoremintro}{Theorem}

\newtheorem{corollaryintro}[theoremintro]{Corollary}

\newcommand{\cC}{\mathscr{C}}

\newcommand{\cL}{\mathscr{L}}
\newcommand{\cP}{\mathscr{P}}

\newcommand{\RR}{\mathbb{R}}
\newcommand{\ZZ}{\mathbb{Z}}

\DeclareMathOperator{\GL}{GL}

\DeclareMathOperator{\rk}{rk}
\DeclareMathOperator{\wt}{wt}

\newcommand{\GZ}{\mathit{GZ}}          
\newcommand{\cN}{\mathscr{N}}          
\newcommand{\cR}{\mathscr{R}}          
\newcommand{\cE}{\mathscr{E}}          
\newcommand{\HS}{\mathscr{HS}}         
\newcommand{\Free}{\mathrm{Free}}      
\newcommand{\bnu}{\boldsymbol{\nu}}    
\newcommand{\Lft}{\mathsf{L}}          
\newcommand{\Rgt}{\mathsf{R}}          

\DeclareMathOperator{\SVT}{SVT}

\DeclareMathOperator{\ex}{ex}

\begin{document}


\title[Set-valued tableaux and cells of Gelfand--Zetlin polytopes]
      {Set-valued tableaux and cells of Gelfand--Zetlin polytopes}

\author{Evgeny Smirnov}
\address{HSE University, Usacheva 6, 119048 Moscow, Russia}
\address{Independent University of Moscow, Bolshoi Vlassievskii pereulok 11, 119002 Moscow, Russia}
\address{Guangdong Technion -- Israel Institute of Technology, Daxue road 241, Shantou, Guangdong, 515063 China}
\email{esmirnov@hse.ru}


\begin{abstract}
Two combinatorial rules are known for the Grassmannian Grothendieck polynomial
$G^{(\beta)}_\lambda$: a sum over set-valued tableaux of shape $\lambda$, due to Buch, and a sum
over the efficient cells of a cellular decomposition of the Gelfand--Zetlin polytope
$GZ(\lambda)$, due to E.~Presnova and the author. All coefficients in both sums equal $1$. We
construct an explicit bijection between the two indexing sets which matches the summands term by
term, carrying the number of excess entries of a tableau to the dimension of the corresponding
cell; in particular the two rules are equivalent, either being deducible from the other. The
efficiency condition on cells turns out to be the column-strictness of tableaux. We then transport Yu's square-root crystal operators to the
cells and find that they respect dimension --- along a double $i$-string the cells alternate
between two consecutive dimensions --- but not incidence: consecutive cells of such a string need
not share a point, already for $\lambda=(2,1,0)$.
\end{abstract}

\maketitle



\section{Introduction}
\label{sec:intro}

Let $\lambda$ be a partition with at most $n$ parts. The Grassmannian Grothendieck polynomial
$G^{(\beta)}_\lambda(x_1,\dots,x_n)$ represents the class of the structure sheaf of a Schubert
variety in the connective $K$-theory of a Grassmannian~\cite{Buch02}, the grading variable $\beta$
going back to Fomin and Kirillov~\cite{FominKirillov94,FominKirillov96}; setting $\beta=0$ recovers
the Schur polynomial $s_\lambda$. Two combinatorial rules for these polynomials are compared in
this paper.

The first is due to Buch~\cite{Buch02}. A \emph{set-valued tableau} of shape $\lambda$ is a filling
of the cells of $\lambda$ by non-empty subsets of $[n]$ such that every choice of one element per
cell yields a semistandard Young tableau; write $\SVT_n(\lambda)$ for the set of these, $\wt(T)$ for
the vector counting the occurrences of each letter, and $\ex(T)=|\wt(T)|-|\lambda|$ for the number
of excess entries. Then
\begin{equation}\label{eq:buch-intro}
G^{(\beta)}_\lambda=\sum_{T\in\SVT_n(\lambda)}\beta^{\ex(T)}x^{\wt(T)}.
\end{equation}

The second is geometric, and is the main result of the joint paper of Presnova and the
author~\cite{PresnovaSmirnov24}. The Gelfand--Zetlin polytope $\GZ(\lambda)$ was introduced
in~\cite{GelfandZetlin50} to index a basis of the irreducible $\GL(n)$-module with highest weight
$\lambda$; it is also the moment polytope of the toric degeneration of the flag variety due to
Gonciulea and Lakshmibai~\cite{GonciuleaLakshmibai96}, and Kogan and Miller~\cite{KoganMiller05}
identified the faces of $\GZ(\lambda)$ that correspond to the degenerations of Schubert varieties.
Building on this, Kiritchenko, the author and Timorin~\cite{KST} expressed key polynomials as sums
over the integer points of suitable unions of such faces. The paper~\cite{PresnovaSmirnov24}
lifts that description one level, from integer points to cells: it constructs a cellular
decomposition $\cC$ of $\GZ(\lambda)$ whose cells $C_P$ are indexed by \emph{enhanced
Gelfand--Zetlin patterns}, that is, integer points of $\GZ(\lambda)$ in which some entries are
encircled and some pairs of neighboring entries in consecutive rows are joined by edges, subject
to eight axioms. The dimension of $C_P$ is the number $\rk P$ of entries of $P$ that are not
encircled, so the $0$-cells are exactly the integer points of $\GZ(\lambda)$. Each pattern is
assigned a monomial $x^P$, which is $0$ precisely for the patterns called \emph{inefficient}, and
\begin{equation}\label{eq:ps-intro}
G^{(\beta)}_\lambda=\sum_{P\in\cP^+(\lambda)}x^P,
\end{equation}
the sum being over the efficient patterns. Thus \eqref{eq:ps-intro} refines the classical count of
integer points of $\GZ(\lambda)$ in the same way that \eqref{eq:buch-intro} refines the tableau
formula for $s_\lambda$.

Both formulas have a common refinement. Lascoux polynomials, introduced by
Lascoux~\cite{Lascoux04} by means of divided difference operators, form a basis of
$\ZZ[\beta][x_1,x_2,\dots]$ which simultaneously generalizes the Grassmannian Grothendieck
polynomials and, at $\beta=0$, the key polynomials, that is, the characters of Demazure
modules~\cite{Demazure74,Andersen85}. Restricting the sum in \eqref{eq:ps-intro} to the cells
lying in a union of faces attached to a permutation $w$ yields a formula for all Lascoux
polynomials~\cite{PresnovaSmirnov24}, generalizing the description of key polynomials in~\cite{KST};
on the tableau side, Yu~\cite{Yu21} refines \eqref{eq:buch-intro} by summing over the set-valued
tableaux whose right key is bounded by a fixed key, which simultaneously generalizes Buch's rule
and the tableau rule of Lascoux and Sch\"utzenberger~\cite{LascouxSchutzenberger90} for key
polynomials. Other combinatorial models are available --- Buciumas, Scrimshaw and
Weber~\cite{BSW20} describe Lascoux polynomials by means of colored five-vertex models --- and
these polynomials are the subject of considerable current activity; we mention the proof by
Shimozono and Yu~\cite{ShimozonoYu23} of the ``Grothendieck to Lascoux'' conjecture of Reiner and
Yong~\cite{ReinerYong21}, and the work of Pan and Yu~\cite{PanYu23} on top-degree components.
In a different, more geometric relation between Gelfand--Zetlin polytopes and Grothendieck
polynomials, Monin and the author~\cite{MoninSmirnov26} realize the K-theory of the flag variety
$\GL(n)/B$ as a ring of difference operators on $\GZ(\lambda)$ and represent the structure sheaves
of Schubert varieties, whose classes are the Grothendieck polynomials, by explicit alternating sums
of face operators, lifting to K-theory the description of Schubert classes in~\cite{KST}.

All coefficients on the right-hand sides of \eqref{eq:buch-intro} and \eqref{eq:ps-intro} are equal
to $1$. The two indexing sets $\SVT_n(\lambda)$ and $\cP^+(\lambda)$ are therefore equinumerous,
and more precisely the two sums carry the same multiset of monomials; but this was known only
because both sides compute $G^{(\beta)}_\lambda$, and no map between the two sets was available.
Constructing one was raised as a question at the end of the introduction
of~\cite{PresnovaSmirnov24}. The main result of this paper answers it.

\begin{theoremintro}[Theorem~\ref{thm:surj}]\label{thm:A}
For every partition $\lambda$ with at most $n$ parts there is an explicit bijection
\[
\widetilde\Phi\colon\ \SVT_n(\lambda)\ \longrightarrow\ \cP^+(\lambda)
\]
such that for every set-valued tableau $T$,
\[
x^{\widetilde\Phi(T)}=\beta^{\ex(T)}x^{\wt(T)}
\qquad\text{and}\qquad
\dim C_{\widetilde\Phi(T)}=\ex(T).
\]
\end{theoremintro}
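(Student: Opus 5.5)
We construct $\widetilde\Phi$ by extending the classical bijection between semistandard tableaux and integer points of $\GZ(\lambda)$. A semistandard tableau $S$ of shape $\lambda$ corresponds to the pattern whose row $k$ is the shape of the subtableau of $S$ with entries $\le k$. A set-valued tableau $T$ gives two such tableaux: $T^{\min}$, taking the minimum of each cell, and $T^{\max}$, taking the maximum. Both are semistandard, because every choice of one element per cell is. So $T$ yields two integer points $P^{\min}\ge P^{\max}$ of $\GZ(\lambda)$, comparable entrywise, since a smaller entry enters the shape earlier.

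The plan is to let the base integer point of $\widetilde\Phi(T)$ be one of these two, say $P^{\max}$. The enhancement records where $T$ has extra entries.

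\begin{itemize}
\item A letter $k$ that is excess in some cell of row $j$ and is not the maximum there marks an entry $(k,j)$ that is not encircled.
\item All other entries are encircled.
\item Edges join neighboring entries in consecutive rows when the shape data forces them to move together.
\end{itemize}

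This choice gives $\rk P=\ex(T)$ directly. It also makes the monomial $x^P$ of \cite{PresnovaSmirnov24} equal to $\beta^{\ex(T)}x^{\wt(T)}$: in that paper each non-encircled entry contributes a factor $\beta x_k$ beyond the base monomial $x^{\wt}$ of the integer point. I first need to check against the definition of $x^P$ that the base point's weight plus the excess letters gives exactly $\wt(T)$.

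The key steps, in order, are these.
\begin{enumerate}
\item Fix the definition of $\widetilde\Phi(T)$ precisely.
\item Verify the eight axioms of enhanced patterns one by one, using the column-strictness and row-weakness of all choices from $T$.
\item Show that $\widetilde\Phi(T)$ is efficient. Inefficiency in \cite{PresnovaSmirnov24} should be exactly a configuration in which two non-encircled entries would force one letter to appear twice in a column of $T$. So efficiency is the column-strictness of $T$, in line with the abstract.
\item Check the weight and dimension identities as above.
\item Construct the inverse map $\Psi\colon\cP^+(\lambda)\to\SVT_n(\lambda)$.
\end{enumerate}

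For step 5, read the base point as a semistandard tableau. Then, for each non-encircled entry $(k,j)$, insert the letter $k$ into the unique cell of row $j$ dictated by the position of that entry and by the edges connecting it. Show that the result is a valid set-valued tableau and that $\Psi\circ\widetilde\Phi$ and $\widetilde\Phi\circ\Psi$ are identities. Once the weights match termwise, injectivity plus the known equality of the two sums would already force surjectivity, since both sides sum to $G^{(\beta)}_\lambda$ with coefficients $1$. Building $\Psi$ explicitly, though, makes the statement independent of either rule, which is what the bijectivity claim calls for.

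The main obstacle is step 5, specifically showing that for an arbitrary efficient pattern the placement of excess letters is well defined and yields a set-valued tableau. The edge axioms are subtle, and a single non-encircled entry in a chain of edge-connected entries may correspond to a letter placed in a cell whose row is not the obvious one. My first approach is induction on $n$. Remove the largest letter $n$ from $T$, which corresponds to deleting the top row of the pattern. Then analyze how the letters $n$ sit in a horizontal strip, some as excess entries in cells that also contain smaller letters, and match this with the encircling and edge data between the top two rows of the pattern. This reduces everything to a two-row local analysis, where the efficiency axiom and column-strictness can be compared directly.
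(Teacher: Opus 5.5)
\textbf{The base point is the wrong one.} With base point $P^{\max}$ your map cannot be injective. Take $\lambda=(2,1,0)$, $n=3$, and the integer point $(a_{11},a_{12},a_{21})=(0,1,0)$, which is the tableau $2\,3/3$. Each entry of this point equals its upper-left neighbor and is smaller than its upper-right one, so (E5) forces every entry to be encircled. The edges of an efficient pattern are determined by its circles (Lemma~\ref{lem:redundant}), so at most one efficient enhanced pattern lies over this point. Yet four set-valued tableaux have maximum tableau $2\,3/3$: $2\,3/3$, $12\,3/3$, $2\,23/3$ and $12\,23/3$. In the other direction, $1\,1/2$ is the maximum tableau of only one set-valued tableau, but the point $(1,2,2)$ carries eight efficient patterns.

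The reason is structural. By (E2), (E5) and efficiency, a non-encircled entry of an efficient pattern is strictly larger than its upper-left neighbor. That constraint is matched by the \emph{minimum} tableau, not the maximum one. Your marking rule is also off by one. By \eqref{eq:xP}, a non-encircled entry of $\nu^{(k)}$ contributes $\beta x_{k+1}$. So it is the letter $k+1$, occurring non-minimally in row $r$, that should mark $\nu^{(k)}_r$.

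\textbf{The missing ideas.} Even after switching to $P^{\min}$, two ideas are missing.

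First, the host-cell observation. If $k+1$ is a non-minimal element of a cell in row $r$, the row condition forces that cell to be $(r,\nu^{(k)}_r)$, the last cell of row $r$ whose minimum is $\le k$. Inserting $k+1$ there respects column-strictness if and only if $\nu^{(k)}_r>\nu^{(k+1)}_{r+1}$, i.e.\ that entry exceeds its upper-left neighbor. Hence $\SVT_n(\lambda)$ is in bijection with pairs consisting of a pattern and an \emph{arbitrary} subset of its free positions; the paper factors $\widetilde\Phi$ through exactly these pairs. This corrects your step~3: the excluded configuration is a \emph{single} marked entry equal to its upper-left neighbor, i.e.\ an excess letter colliding with the cell below its host. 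It also shows your step~5 is the easy direction. The placement of excess letters depends only on the integer point, never on edges, and injectivity is immediate because edges are recovered from circles.

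Second, the substance lies in your step~2, and you cannot start it: ``edges join entries when the shape data forces them to move together'' is not a definition. The edges have to be those of the recipe $\cR$. In the all-equal case, $\cR$ draws both upward edges only if the two upper neighbors are already joined by a path in the rows above, and otherwise draws the upper-right edge alone. Neither constant rule satisfies (E3).

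With $\cR$, efficiency and every axiom except (E3) follow from one fact: an entry equal to its upper-left neighbor is never marked. The hard half of (E3) says that if the bottom of a diamond is joined to both side entries, then so is the top. Proving it requires rerouting a path through row $i$ between positions in rows $\le i-1$ so that it stays within rows $\le i-1$. This works because an interior vertex in row $i$ carries both upward edges, so it already certifies connectivity one row higher. That connectivity is not local to two rows, so your induction on $n$ would have to carry it along, and the proposal does not address it.

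Your fallback (injectivity plus equality of the two known sums) is logically sound. It does not avoid this step, since you still need $\widetilde\Phi(T)\in\cP^+(\lambda)$.
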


Thus a set-valued tableau is a cell of $\cC$, its excess entries are the dimensions along which the
cell is free to move, and an ordinary semistandard tableau is a vertex.

The bijection factors through an intermediate object which is the real content of the construction.
Call a position of a Gelfand--Zetlin pattern \emph{free} if its entry is strictly larger than its
upper-left neighbor, and call a \emph{marked pattern} a pair $(A,U)$ consisting of a pattern $A$
and a subset $U$ of its free positions. Marked patterns form a set $\cN(\lambda)$ which is
transparently described: there are $2^{|\Free(A)|}$ of them over each pattern $A$, with no further
conditions. Theorem~\ref{thm:A} is then the composite of two bijections onto $\cN(\lambda)$.

The first, treated in Section~\ref{sec:bijection}, goes from tableaux. Given $T$, take the minima of
its cells; these form a semistandard tableau, hence a chain of partitions, hence a Gelfand--Zetlin
pattern. The remaining, non-minimal letters have nowhere to hide: a letter $k+1$ occurring
non-minimally in row $r$ must occupy one prescribed cell of that row (Lemma~\ref{lem:host}), so the
excess entries of $T$ are recorded by a set of positions and nothing else. This gives a bijection
$\Phi\colon\SVT_n(\lambda)\to\cN(\lambda)$ transporting $\ex$ and $\wt$ to the rank and the exponents
of $x^P$.

The second, treated in Section~\ref{sec:surjectivity}, goes from patterns, and is where the work
lies. Forgetting the edges of an efficient enhanced pattern and remembering which entries are not
encircled gives a map $\Psi\colon\cP^+(\lambda)\to\cN(\lambda)$; it is injective because the edges
of an efficient pattern can be reconstructed from the circles by a recipe $\cR$
of~\cite{PresnovaSmirnov24}. Surjectivity is the assertion that $\cR$, applied to an
\emph{arbitrary} marked pattern, always returns a legitimate enhanced pattern. Seven of the eight
axioms, and efficiency, follow from a single observation, the Anchor Lemma~\ref{lem:anchor}: an
entry equal to its upper-left neighbor is encircled. The eighth axiom is harder, and reduces to a
graph-theoretic statement; it is proved in Section~\ref{ss:descent} by means of a Descent Lemma
which propagates connectivity from one row of a pattern to the row above it. The tie-breaking rule of $\cR$ turns out
to be forced rather than conventional: neither of the two constant alternatives satisfies the axiom
(Remark~\ref{rem:tiebreak}).

The two halves of the argument meet in a statement that explains the somewhat technical notion of
efficiency. The free condition on a position is exactly the column-strictness of set-valued tableaux
read through the dictionary between patterns and chains; the enhanced patterns excluded by
efficiency are precisely those whose prescribed excess letter would collide with the cell below its
host (Corollary~\ref{cor:whyeff}).

Neither \eqref{eq:buch-intro} nor \eqref{eq:ps-intro} is used anywhere in the proof: the maps are
constructed and inverted explicitly. Two consequences follow.

\begin{corollaryintro}[Corollary~\ref{cor:equiv}]\label{cor:B}
Buch's formula \eqref{eq:buch-intro} and the formula \eqref{eq:ps-intro}
of~\cite{PresnovaSmirnov24} are equivalent; each may be deduced from the other, the bijection
$\widetilde\Phi$ matching their summands term by term.
\end{corollaryintro}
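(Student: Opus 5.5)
The corollary follows formally from Theorem~\ref{thm:A}; no further combinatorics of tableaux or cells is needed. By Theorem~\ref{thm:surj}, $\widetilde\Phi\colon\SVT_n(\lambda)\to\cP^+(\lambda)$ is a bijection. For every $T$ it satisfies $x^{\widetilde\Phi(T)}=\beta^{\ex(T)}x^{\wt(T)}$. Reindexing a finite sum along this bijection gives
\[
\sum_{T\in\SVT_n(\lambda)}\beta^{\ex(T)}x^{\wt(T)}
=\sum_{T\in\SVT_n(\lambda)}x^{\widetilde\Phi(T)}
=\sum_{P\in\cP^+(\lambda)}x^{P}.
\]
This is an identity of polynomials in $\ZZ[\beta][x_1,\dots,x_n]$. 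It holds independently of what either side is known to equal.

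The argument then has three steps.

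First, I check that the construction of $\widetilde\Phi=\Psi^{-1}\circ\Phi$ nowhere invokes \eqref{eq:buch-intro} or \eqref{eq:ps-intro}, as the introduction asserts. The inputs are:
\begin{itemize}
\item the bijection $\Phi$ of Section~\ref{sec:bijection}, which rests on Lemma~\ref{lem:host};
\item injectivity of $\Psi$, which uses the reconstruction recipe $\cR$ of~\cite{PresnovaSmirnov24}, a purely combinatorial statement about patterns;
\item surjectivity of $\Psi$, which rests on the Anchor Lemma~\ref{lem:anchor} and the Descent Lemma.
\end{itemize}
It is also necessary that the monomial $x^P$ and the rank $\rk P$ are defined directly from the enhanced pattern, and not through the polynomial. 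This holds by their definition in~\cite{PresnovaSmirnov24}.

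Second, I deduce the two directions from the displayed identity. If Buch's theorem \eqref{eq:buch-intro} is assumed, the left side equals $G^{(\beta)}_\lambda$, so the right side does too; this is \eqref{eq:ps-intro}. Conversely, if \eqref{eq:ps-intro} is assumed, the same identity read from right to left gives \eqref{eq:buch-intro}. In both directions the summands are matched term by term along $\widetilde\Phi$, each with coefficient $1$. Moreover, $\dim C_{\widetilde\Phi(T)}=\ex(T)$, so the power of $\beta$ corresponds to the cell dimension.

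The only point needing care is the first step: I must rule out any hidden circularity in the ingredients. In particular, the facts imported from~\cite{PresnovaSmirnov24} (the recipe $\cR$ and the axioms) must have been established there before and independently of its main formula. I would verify this by tracing the citations. Everything else is bookkeeping.
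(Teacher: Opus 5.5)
Your proposal is correct and matches the paper's proof. Both reindex the sum along the bijection $\widetilde\Phi$ of Theorem~\ref{thm:surj}, which carries $\beta^{\ex(T)}x^{\wt(T)}$ to $x^{\widetilde\Phi(T)}$, so the two sums agree as formal expressions and either formula yields the other; your circularity check mirrors the paper's remark that neither formula is used anywhere in constructing $\widetilde\Phi$.
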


\begin{corollaryintro}[Corollary~\ref{cor:count}]\label{cor:C}
$\bigl|\SVT_n(\lambda)\bigr|=\sum_{A}2^{|\Free(A)|}$, the sum being over all Gelfand--Zetlin
patterns $A$ with top row $\lambda$; and the number of set-valued tableaux with $\ex(T)=d$ is
$\sum_{A}\binom{|\Free(A)|}{d}$.
\end{corollaryintro}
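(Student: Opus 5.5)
The count follows from the bijection $\Phi\colon\SVT_n(\lambda)\to\cN(\lambda)$ of Section~\ref{sec:bijection}, with no use of the enhanced-pattern side. By construction $\cN(\lambda)$ is the disjoint union, over all Gelfand--Zetlin patterns $A$ with top row $\lambda$, of the sets $\{(A,U) : U\subseteq\Free(A)\}$. There are no further constraints on $U$, so the fibre over $A$ has exactly $2^{|\Free(A)|}$ elements. Summing over $A$ gives $|\cN(\lambda)|=\sum_A 2^{|\Free(A)|}$, and since $\Phi$ is a bijection this equals $|\SVT_n(\lambda)|$.

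For the graded count I will use the second property of $\Phi$: it carries $\ex(T)$ to the rank of the marked pattern. I need to pin down that this rank is $|U|$. Under $\Phi$, the minima of the cells of $T$ give the pattern $A$, and by Lemma~\ref{lem:host} each non-minimal letter is recorded by exactly one position of $U$. Distinct excess entries go to distinct positions, since a non-minimal letter $k+1$ in row $r$ has a unique prescribed host cell and so appears at most once per pair $(k,r)$. Hence $\ex(T)=|U|$ where $\Phi(T)=(A,U)$. The set-valued tableaux with $\ex(T)=d$ then correspond bijectively to pairs $(A,U)$ with $U\subseteq\Free(A)$ and $|U|=d$. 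There are $\binom{|\Free(A)|}{d}$ such $U$ over each $A$, so summing over $A$ gives the second formula.

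As a consistency check, summing $\binom{|\Free(A)|}{d}$ over $d$ recovers $2^{|\Free(A)|}$, and $d=0$ gives the number of patterns, that is, of semistandard tableaux. Alternatively, one can compose with $\Psi^{-1}$ and use $\dim C_{\widetilde\Phi(T)}=\ex(T)$ from Theorem~\ref{thm:surj}, but this is unnecessary. The only step that needs care is the identification $\ex(T)=|U|$, that is, that the map from excess entries to marked positions is injective. This is the content of the host lemma, so I expect no real obstacle.
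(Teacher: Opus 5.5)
Your argument is correct and is the same count as the paper's: the fibre of $\cN(\lambda)$ over $A$ is the set of all subsets of $\Free(A)$, graded by $|U|$, and your identification $\ex(T)=|U|$ is exactly Theorem~\ref{thm:stats}. The paper's proof cites Theorem~\ref{thm:surj} instead, because its version in Section~\ref{ss:consequences} also asserts the equality with $|\cP^+(\lambda)|$ and counts efficient $d$-dimensional cells, which needs the surjectivity of $\Psi$; for the tableau statement as posed, $\Phi$ alone (Theorem~\ref{thm:bij1}) suffices, so you rightly never need Section~\ref{sec:surjectivity}.
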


Finally, the introduction of~\cite{PresnovaSmirnov24} expresses the expectation that the crystal
operators which Yu~\cite{Yu21} defines on set-valued tableaux, and whose squares are the ordinary
crystal operators, should admit a description in terms of Gelfand--Zetlin polytopes.
Theorem~\ref{thm:A} transports them to the cells of $\cC$, and Section~\ref{sec:crystal} examines
what they become. The outcome is mixed. On the one hand, dimension behaves perfectly: along a
double $i$-string the cells alternate between two consecutive dimensions
(Theorem~\ref{thm:altern}), so that $f'_i$ never preserves the dimension of a cell although its
square always does. On the other hand, the geometric statement one would hope for is false. Two
consecutive cells of a double $i$-string need not be incident, and need not even be adjacent: the
smallest counterexample already occurs for $\lambda=(2,1,0)$ and $n=3$ (Theorem~\ref{thm:counter}).
The expectation
of~\cite{PresnovaSmirnov24}, then, holds at the level of dimensions and fails at the level of the
face poset, and the question of what should replace it is one of several collected in
Section~\ref{ss:questions}.

The paper is organized as follows. Section~\ref{sec:background} fixes conventions and recalls what
is needed from~\cite{Buch02} and~\cite{PresnovaSmirnov24}; the only novelty there is the dictionary
\eqref{eq:dict} between patterns and chains of partitions, which is used throughout.
Sections~\ref{sec:bijection} and~\ref{sec:surjectivity} construct $\widetilde\Phi$ and prove
Theorem~\ref{thm:A}. Section~\ref{sec:example} works out the case $\lambda=(2,1,0)$, $n=3$ in full,
listing all $27$ cells and their tableaux. Section~\ref{sec:crystal} treats the crystal operators.

\subsection*{Acknowledgements}
I am grateful to Tianyi Yu for useful discussions. This research was partially supported by the Basic
Research Program of HSE University (HSE-BR-2025-84). These results were obtained, and the text of
this paper prepared, in collaboration with Claude (Opus~4.8 and Fable~5); the results have been
human-verified, and the author bears full responsibility for their correctness.


\section{Conventions and background}
\label{sec:background}

Throughout the paper $n\ge 1$ is a fixed integer and
$\lambda=(\lambda_1\ge\lambda_2\ge\dots\ge\lambda_n\ge 0)$ is a partition with at most $n$ parts.
We identify $\lambda$ with its Young diagram, drawn in the English convention, and index its cells
by pairs $(r,c)$ with $1\le r\le n$ and $1\le c\le\lambda_r$. We write $|\lambda|=\sum\lambda_r$.

Section~\ref{ss:chains} recalls Gelfand--Zetlin polytopes and introduces the description of
Gelfand--Zetlin patterns by chains of partitions, which is the form in which they will be compared
with tableaux. Section~\ref{ss:svt} recalls set-valued tableaux and Buch's formula, and
Section~\ref{ss:enhanced} recalls the enhanced patterns and the cellular decomposition
of~\cite{PresnovaSmirnov24}. Nothing in this section is new; the only novelty is the dictionary
\eqref{eq:dict}, which is a matter of bookkeeping but will be used constantly.

\subsection{Gelfand--Zetlin patterns as chains of partitions}
\label{ss:chains}

Let $d=\frac{n(n-1)}{2}$ and consider $\RR^{d}$ with coordinates $y_{ij}$ indexed by pairs of
positive integers with $i+j\le n$. Consider the triangular tableau
\begin{equation}\label{eq:gz-tableau}
	\begin{matrix}
		\lambda_n && \lambda_{n-1}&& \lambda_{n-2}&&\dots &&\lambda_1\\
		& y_{11} && y_{12} && \dots && y_{1,n-1}\\
		&& y_{21}&& \dots && y_{2,n-2}\\
		&&& \ddots & \vdots &\reflectbox{$\ddots$}\\
		&&&&y_{n-1,1}
	\end{matrix}
\end{equation}
and impose, for every small triangle
$\begin{smallmatrix} a && b \\ & c \end{smallmatrix}$ occurring in it, the inequalities
$a\le c\le b$. It is convenient to set
\[
y_{0,j}:=\lambda_{n+1-j},\qquad 1\le j\le n,
\]
so that the topmost row of \eqref{eq:gz-tableau} is $(y_{0,1},\dots,y_{0,n})$ and the inequalities
read
\begin{equation}\label{eq:gz}
y_{i-1,j}\ \le\ y_{ij}\ \le\ y_{i-1,j+1},\qquad 1\le i\le n-1,\quad 1\le j\le n-i.
\end{equation}
Note that the top row is written in \emph{increasing} order.

\begin{definition}\label{def:gz}
The \emph{Gelfand--Zetlin polytope} $\GZ(\lambda)\subset\RR^{d}$ is the set of points satisfying
\eqref{eq:gz}. A \emph{Gelfand--Zetlin pattern} with top row $\lambda$ is an integer point of
$\GZ(\lambda)$; we denote its entries by $a_{ij}$, again with $a_{0,j}=\lambda_{n+1-j}$.
\end{definition}

We shall constantly refer to the two entries $a_{i-1,j}$ and $a_{i-1,j+1}$ lying immediately above
$a_{ij}$ as its \emph{upper-left} and \emph{upper-right} neighbors.

Gelfand--Zetlin patterns admit a second, equally classical description, which is the one that
matches tableaux. Let
\[
\bnu=\bigl(\varnothing=\nu^{(0)}\subseteq\nu^{(1)}\subseteq\dots\subseteq\nu^{(n)}=\lambda\bigr)
\]
be a chain of partitions in which every skew shape $\nu^{(k)}/\nu^{(k-1)}$ is a horizontal strip,
that is,
\begin{equation}\label{eq:hstrip}
\nu^{(k)}_r\ \ge\ \nu^{(k-1)}_r\ \ge\ \nu^{(k)}_{r+1}
\qquad\text{for all }k,r.
\end{equation}
Denote by $\HS(\lambda)$ the set of all such chains, the letters standing for
``horizontal strip''.

\begin{lemma}\label{lem:dict}
The assignment
\begin{equation}\label{eq:dict}
a_{ij}=\nu^{(k)}_r,\qquad\text{where } k=n-i \text{ and } r=k-j+1
\end{equation}
\textup{(}equivalently $i=n-k$, $j=k-r+1$\textup{)} is a bijection between $\HS(\lambda)$ and the
set of Gelfand--Zetlin patterns with top row $\lambda$. Under it, row $i$ of the tableau
\eqref{eq:gz-tableau}, read from right to left, is the partition $\nu^{(n-i)}$; and for every
$(i,j)$ with $i\ge 1$,
\begin{equation}\label{eq:upleft}
a_{i-1,j}=\nu^{(k+1)}_{r+1},\qquad a_{i-1,j+1}=\nu^{(k+1)}_{r},
\end{equation}
so that the upper-left neighbor of $\nu^{(k)}_r$ is $\nu^{(k+1)}_{r+1}$ and its upper-right
neighbor is $\nu^{(k+1)}_{r}$.
\end{lemma}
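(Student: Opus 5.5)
The plan is to check that the index change \eqref{eq:dict} is a bijection of index sets, and then that it carries the horizontal-strip inequalities \eqref{eq:hstrip} exactly onto the Gelfand--Zetlin inequalities \eqref{eq:gz}.

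First, the index sets. For $0\le i\le n-1$ and $1\le j\le n-i$, put $k=n-i$ and $r=k-j+1$. Then $k$ ranges over $1,\dots,n$ and $r$ over $1,\dots,k$. The inverse is $i=n-k$, $j=k-r+1$. So \eqref{eq:dict} identifies the entries of row $i$ with the first $k$ parts of $\nu^{(k)}$. Here we allow $\nu^{(k)}$ to have at most $k$ nonzero parts, which is forced by \eqref{eq:hstrip}: by induction from $\nu^{(0)}=\varnothing$, $\nu^{(k)}_{r+1}\le\nu^{(k-1)}_r=0$ for $r\ge k$. The case $i=0$ is a consistency check: $k=n$ and $a_{0,j}=\nu^{(n)}_{n+1-j}=\lambda_{n+1-j}$, which agrees with the convention $a_{0,j}=\lambda_{n+1-j}$. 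Since $r$ decreases as $j$ increases, row $i$ read from right to left is $(\nu^{(k)}_1,\dots,\nu^{(k)}_k)$, as claimed.

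Next, the neighbor formula \eqref{eq:upleft}. Replacing $(i,j)$ by $(i-1,j)$ changes $k$ to $k+1$ and $r=k-j+1$ to $(k+1)-j+1=r+1$. Replacing $(i,j)$ by $(i-1,j+1)$ changes $k$ to $k+1$ and keeps $r$ unchanged. This gives $a_{i-1,j}=\nu^{(k+1)}_{r+1}$ and $a_{i-1,j+1}=\nu^{(k+1)}_r$.

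Finally, the inequalities. Substituting these formulas, \eqref{eq:gz} for the pair $(i,j)$ becomes
\[
\nu^{(k+1)}_{r+1}\le\nu^{(k)}_r\le\nu^{(k+1)}_r,\qquad 1\le r\le k,\ 1\le k\le n-1.
\]
This is exactly \eqref{eq:hstrip} for the step $\nu^{(k)}\subseteq\nu^{(k+1)}$, for all $r\le k$. For $r>k$ both sides of \eqref{eq:hstrip} vanish by the support remark above, so nothing is lost. Conversely, given a pattern, we define $\nu^{(k)}$ by \eqref{eq:dict}, padded with zeros, and set $\nu^{(0)}=\varnothing$. Each $\nu^{(k)}$ is then weakly decreasing, because chaining the two inequalities gives $\nu^{(k)}_{r+1}\le\nu^{(k+1)}_{r+1}\le\nu^{(k)}_r$; at $k=n$ we get $\lambda$ itself. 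The interlacing gives the horizontal-strip conditions, including the trivial step from $\nu^{(0)}$ to $\nu^{(1)}$ (which needs only $\nu^{(1)}_1\ge 0$) and the edge cases $r=k$, where the conditions involve the zero parts. Integrality is preserved in both directions, so the two maps are mutually inverse.

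The only point needing care is this bookkeeping at the boundary: the parts with $r>k$, the step from $\nu^{(0)}$, and nonnegativity, which comes from $\lambda_n\ge0$ together with the lower bounds. There is no real obstacle; the argument is a direct reindexing.
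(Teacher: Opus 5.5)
Your proposal is correct and follows essentially the same route as the paper: reindex via $k=n-i$, $r=k-j+1$, obtain \eqref{eq:upleft} by shifting indices, substitute into \eqref{eq:gz} to get \eqref{eq:hstrip}, and check the boundary conventions ($\nu^{(n)}=\lambda$, and parts with $r>k$ vanish). You are more explicit than the paper about the converse direction (each $\nu^{(k)}$ is a partition, and the zero-padded inequalities hold by nonnegativity). One small slip: in the step $\nu^{(k)}\subseteq\nu^{(k+1)}$ at $r=k+1$, the term $\nu^{(k+1)}_{k+1}$ need not vanish, but your nonnegativity remark already covers that inequality.
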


\begin{proof}
For $1\le i\le n-1$ and $1\le j\le n-i$ formula \eqref{eq:dict} gives $1\le k\le n-1$ and
$1\le r\le k$, and the two ranges correspond bijectively; the row index $k$ decreases as $i$
increases, and within a row the column index $r$ decreases as $j$ increases, which is the assertion
about reading rows from right to left. The identities \eqref{eq:upleft} are immediate: passing from
$(i,j)$ to $(i-1,j)$ replaces $k$ by $k+1$ and leaves $j$ unchanged, hence replaces $r=k-j+1$ by
$r+1$; passing to $(i-1,j+1)$ replaces $k$ by $k+1$ and $j$ by $j+1$, leaving $r$ unchanged.
Substituting \eqref{eq:upleft} into \eqref{eq:gz} turns the latter into \eqref{eq:hstrip}. Finally,
the boundary conventions match: $a_{0,j}=\lambda_{n+1-j}$ corresponds to $\nu^{(n)}=\lambda$, and
the entries $\nu^{(k)}_r$ with $r>k$, which do not occur in the tableau, vanish because
$\nu^{(k)}\subseteq\lambda$ has at most $k$ non-zero parts by \eqref{eq:hstrip}.
\end{proof}

We use the two descriptions interchangeably and refer to \eqref{eq:dict} as the \emph{dictionary}.
Under it, the condition that an entry be strictly larger than its upper-left neighbor reads
\begin{equation}\label{eq:free-both}
a_{ij}>a_{i-1,j}
\qquad\Longleftrightarrow\qquad
\nu^{(k)}_r>\nu^{(k+1)}_{r+1};
\end{equation}
this condition will play a central role in Section~\ref{sec:bijection}.

\subsection{Set-valued tableaux}
\label{ss:svt}

We follow~\cite{Yu21}. Write $[n]=\{1,\dots,n\}$.

\begin{definition}\label{def:svt}
A \emph{set-valued tableau} of shape $\lambda$ with entries in $[n]$ is a filling $T$ of the cells
of $\lambda$ by non-empty subsets of $[n]$ such that every choice of one element in each cell
produces a semistandard Young tableau. We write $\SVT_n(\lambda)$ for the set of these.
\end{definition}

Equivalently, $T$ is a set-valued tableau if and only if
\begin{equation}\label{eq:svt}
\max T(r,c)\ \le\ \min T(r,c+1)
\qquad\text{and}\qquad
\max T(r,c)\ <\ \min T(r+1,c)
\end{equation}
whenever both sides are defined. For $T\in\SVT_n(\lambda)$ let $\wt(T)=(\wt(T)_1,\dots,\wt(T)_n)$,
where $\wt(T)_m$ is the number of cells of $T$ containing $m$, and let
\[
\ex(T)=|\wt(T)|-|\lambda|=\sum_{(r,c)}\bigl(|T(r,c)|-1\bigr)
\]
be the number of \emph{excess} entries. Thus $\ex(T)=0$ precisely when $T$ is an ordinary
semistandard Young tableau.

The relevance of these objects is due to Buch.

\begin{theorem}[\cite{Buch02}]\label{thm:buch}
For every partition $\lambda$ with at most $n$ parts,
\begin{equation}\label{eq:buch}
G^{(\beta)}_\lambda(x_1,\dots,x_n)
=\sum_{T\in\SVT_n(\lambda)}\beta^{\ex(T)}x^{\wt(T)}.
\end{equation}
\end{theorem}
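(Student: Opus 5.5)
This is Buch's theorem, stated here as background and cited from \cite{Buch02}. A self-contained proof would run as follows. The definition of $G^{(\beta)}_\lambda$ most convenient for the argument is the bialternant (determinantal) one:
\[
G^{(\beta)}_\lambda(x_1,\dots,x_n)=\frac{\det\bigl(x_j^{\lambda_i+n-i}(1+\beta x_j)^{i-1}\bigr)_{i,j}}{\prod_{i<j}(x_i-x_j)}.
\]
Alternatively, one can use the isobaric divided difference characterization $G_\lambda=\pi_{w_0}\bigl(x^\lambda\prod_{j}(1+\beta x_j)^{j-1}\bigr)$ up to the standard normalization. The plan is to show that the generating function $F_\lambda=\sum_{T}\beta^{\ex(T)}x^{\wt(T)}$ satisfies the same characterization.

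The first step is symmetry of $F_\lambda$. I would construct, for each $i$, an involution on $\SVT_n(\lambda)$ that exchanges $\wt_i$ and $\wt_{i+1}$ and preserves $\ex$. This is a set-valued analogue of the Bender--Knuth involution. In each row, one isolates the letters $i,i+1$ that are ``free'', meaning not forced by an $i$ above an $i+1$ in the same column or by a cell containing both. Cells containing $\{i,i+1\}$ behave as paired objects. One then swaps the counts of free $i$'s and free $(i+1)$'s within each row segment. Checking that the result is still set-valued needs the column conditions in \eqref{eq:svt}.

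The second step is to identify $F_\lambda$ with the determinant. One route is a branching recursion. Restricting $T$ to the letters $<n$ gives a set-valued tableau of a shape $\mu\subseteq\lambda$. The cells containing $n$ form a horizontal strip $\lambda/\mu$, together with possibly some extra cells of $\mu$, namely the last cell in each row of $\mu$ whose position allows a non-minimal $n$. This is essentially Lemma~\ref{lem:host} in the paper. Consequently
\[
F_\lambda(x_1,\dots,x_n)=\sum_{\mu}F_\mu(x_1,\dots,x_{n-1})\,x_n^{|\lambda/\mu|}(1+\beta x_n)^{e(\lambda,\mu)},
\]
where $e(\lambda,\mu)$ counts the rows $r$ with $\mu_r>\lambda_{r+1}$. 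In Gelfand--Zetlin terms these are exactly the free positions of \eqref{eq:free-both}. One then shows that the bialternant satisfies the same branching rule, by expanding the determinant in the last variable (a Lindström--Gessel--Viennot or Jacobi--Trudi style computation). Both sides agree for $n=1$, so induction on $n$ finishes the proof.

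I expect the main obstacle to be this last verification, the branching rule for the determinantal side, where the factors $(1+\beta x_n)^{i-1}$ must telescope into exactly $(1+\beta x_n)^{e(\lambda,\mu)}$. If this gets messy, my fallback is Buch's original route: establish the Pieri rule for $F_\lambda$ combinatorially, and compare with the known $K$-theoretic Pieri rule.
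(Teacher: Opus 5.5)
The paper gives no proof of this statement. It is quoted from \cite{Buch02} as background, and the paper's own arguments never use it; Corollary~\ref{cor:equiv} only shows, via $\widetilde\Phi$, that it is equivalent to \eqref{eq:ps}, which rests on the geometry of \cite{PresnovaSmirnov24}. Your argument is therefore a genuinely different, self-contained route, and it is correct.

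The branching rule holds exactly as you state it. Deleting $n$ leaves a set-valued tableau of shape $\mu$ in $[n-1]$, and the cells equal to $\{n\}$ form $\lambda/\mu$. By Lemma~\ref{lem:host} (with $k=n-1$) and the column condition (this is Lemma~\ref{lem:nec}), a non-minimal $n$ can sit only in $(r,\mu_r)$, and only when $\mu_r>\lambda_{r+1}$. Any subset of these rows may be used, which gives the factor $(1+\beta x_n)^{e(\lambda,\mu)}$.

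The step you fear is only a few lines. For $i=1,\dots,n-1$, subtract $x_n^{\lambda_i-\lambda_{i+1}+1}(1+\beta x_n)^{-1}$ times the original row $i+1$ from row $i$. This does not change the determinant, and it kills column $n$ except for the entry $x_n^{\lambda_n}(1+\beta x_n)^{n-1}$ in row $n$. Using $x_j^{d}(1+\beta x_n)-x_n^{d}(1+\beta x_j)=(x_j-x_n)\sum_{a=0}^{d-1}x_j^{a}x_n^{d-1-a}(1+\beta x_n)^{[a\ge1]}$, the new $(i,j)$ entry for $j<n$ is
\[
(x_j-x_n)\sum_{\mu_i=\lambda_{i+1}}^{\lambda_i}x_n^{\lambda_i-\mu_i}\,(1+\beta x_n)^{[\mu_i>\lambda_{i+1}]-1}\,x_j^{\mu_i+(n-1)-i}(1+\beta x_j)^{i-1}.
\]
Now expand along column $n$, pull out $\prod_{j<n}(x_j-x_n)$, and use multilinearity in the rows. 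The exponents of $1+\beta x_n$ add up to $(n-1)+\sum_{i<n}\bigl([\mu_i>\lambda_{i+1}]-1\bigr)=e(\lambda,\mu)$, which is exactly your recursion. So neither Lindstr\"om--Gessel--Viennot nor the Pieri fallback is needed.

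Three further remarks.

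\begin{itemize}
\item Your first step is superfluous. The induction identifies $F_\lambda$ with the bialternant outright, so symmetry comes for free. As sketched, the set-valued Bender--Knuth involution is too vague to check, so drop it.
\item Your $\pi_{w_0}$ variant is literally the same expression as the bialternant, by the antisymmetrizer formula for $\pi_{w_0}$, with no normalization needed. The paper introduces $G^{(\beta)}_\lambda$ as a $K$-theoretic Schubert class, and identifying that class with the bialternant is a separate standard input you should cite.
\item Your route buys something in the context of this paper. Iterating your recursion over the letters $n,n-1,\dots,1$ writes the bialternant directly as the weighted sum over marked patterns $\cN(\lambda)$, because $e(\nu^{(k)},\nu^{(k-1)})$ counts the free positions in row $n-k+1$. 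Together with Theorem~\ref{thm:surj}, this gives an elementary proof of the identity \eqref{eq:ps}. Within the paper, by contrast, Buch's rule is available only by citation or from \eqref{eq:ps} through Corollary~\ref{cor:equiv}.
\end{itemize}
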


Yu~\cite{Yu21} extends \eqref{eq:buch} to all Lascoux polynomials, by summing over those
set-valued tableaux whose right key is bounded by a given key; he does so by equipping
$\SVT_n(\lambda)$ with a structure of an abstract Kashiwara crystal, whose raising and lowering
operators $e'_i$ and $f'_i$ satisfy $(e'_i)^2=e_i$ and $(f'_i)^2=f_i$. We shall not need the general
rule until Section~\ref{sec:crystal}, where the operators $f'_i$ are recalled in detail.

\subsection{Enhanced patterns, efficiency, and cells}
\label{ss:enhanced}

We now recall the construction of~\cite{PresnovaSmirnov24}. An \emph{enhancement} of a
Gelfand--Zetlin pattern consists of two kinds of data: some entries may be \emph{encircled}, and
some pairs of neighboring entries in consecutive rows may be joined by an \emph{edge}.

\begin{definition}[{\cite[Definition~4.1.1]{PresnovaSmirnov24}}]\label{def:enhanced}
A Gelfand--Zetlin pattern with top row $\lambda$, together with an enhancement, is an
\emph{enhanced Gelfand--Zetlin pattern} if the following conditions hold.
\begin{enumerate}
\item[(E1)]\label{cond:1} All entries of the top row are encircled.
\item[(E2)]\label{cond:2} Two entries joined by an edge are equal, and the lower one is encircled.
      \textup{(}The converse need not hold: equal neighboring entries need not be joined.\textup{)}
\item[(E3)]\label{cond:3} Two neighboring entries in a row are joined by edges to an entry above
      them if and only if they are joined by edges to an entry below them.
\item[(E4)]\label{cond:4} If two entries of the top row are equal, then the entry below them, which
      is equal to both, is encircled and joined to both by edges.
\item[(E5)]\label{cond:5} If $a<b$ and the pattern contains a triangle
      $\begin{smallmatrix} a&&b\\ &a\end{smallmatrix}$, then its bottom entry is encircled.
\item[(E6)]\label{cond:6} If $a<b$ and the pattern contains a triangle
      $\begin{smallmatrix} a&&b\\ &b\end{smallmatrix}$ whose bottom entry is encircled, then the
      two entries equal to $b$ are joined by an edge.
\item[(E7)]\label{cond:7} If a triangle $\begin{smallmatrix} a&&a\\ &a\end{smallmatrix}$ occurs and
      its two top entries can be joined by a path of edges, then its bottom entry is encircled and
      joined to both of them.
\item[(E8)]\label{cond:8} If a triangle $\begin{smallmatrix} a&&a\\ &a\end{smallmatrix}$ occurs and
      its bottom entry is encircled, then that entry is joined by an edge to at least one of the
      two top ones.
\end{enumerate}
The set of enhanced patterns with top row $\lambda$ is denoted $\cP(\lambda)$.
\end{definition}

In (E7) the path is understood to lie in the rows above the triangle; see the proof of
Lemma~\ref{lem:redundant} below, where this is used in the sharper form ``in rows $\le i-1$''.

\begin{definition}[{\cite[Definitions~4.1.5 and~4.1.6]{PresnovaSmirnov24}}]\label{def:rank}
The \emph{rank} $\rk P$ of an enhanced pattern $P$ is the number of its entries that are not
encircled. The pattern $P$ is \emph{inefficient} if it contains a triangle
$\begin{smallmatrix} a&&a\\ &a\end{smallmatrix}$ whose bottom entry is not joined by an edge to the
\emph{upper-right} one, and \emph{efficient} otherwise. We write $\cP^+(\lambda)\subseteq
\cP(\lambda)$ for the set of efficient enhanced patterns with top row $\lambda$.
\end{definition}

For an efficient pattern the edges carry no information beyond the circles.

\begin{lemma}[{\cite[Lemma~4.1.8]{PresnovaSmirnov24}}]\label{lem:redundant}
The edges of an efficient enhanced pattern are uniquely determined by the set of its encircled
entries, by the following procedure $\cR$. Declare equal neighboring entries of the top row to be
connected. Scan the rows from top to bottom; for an encircled entry $a_{ij}$ with $i\ge1$, compare
it with its upper neighbors $a_{i-1,j}$ and $a_{i-1,j+1}$:
\begin{itemize}
\item if it equals neither, draw no edge upwards from $a_{ij}$;
\item if it equals exactly one of them, join $a_{ij}$ to that one;
\item if it equals both, join $a_{ij}$ to both when $(i-1,j)$ and $(i-1,j+1)$ are already joined by
      a path of edges lying in rows $\le i-1$, and to the upper-right one alone otherwise.
\end{itemize}
Entries that are not encircled receive no edges.
\end{lemma}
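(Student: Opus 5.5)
We must show: given an efficient enhanced pattern $P$, its edge set coincides with the output of $\cR$ applied to its set of encircled entries. The argument goes by induction on the row index $i$, scanning top to bottom exactly as $\cR$ does. The induction hypothesis is that the edges of $P$ lying in rows $\le i-1$ (both endpoints in those rows) are those produced by $\cR$. In particular, the relation ``joined by a path of edges in rows $\le i-1$'' is already determined by the circles.

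The base case is the top row. Here I note first that no edges join two entries of the same row: edges only join neighbors in consecutive rows. So the ``connection'' of equal top-row neighbors declared by $\cR$ is bookkeeping standing in for (E4): equal top entries are linked through the entry below them, which is encircled and joined to both.

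For the induction step, fix an entry $a_{ij}$ with $i\ge1$ and determine the edges from it to its upper neighbors $a_{i-1,j}$ and $a_{i-1,j+1}$. By (E2), an upward edge forces equality and forces $a_{ij}$ to be encircled. Hence non-encircled entries have no upward edges, and an encircled entry equal to neither upper neighbor has none either. We then split by cases.

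\begin{itemize}
\item \textbf{Equal to exactly one neighbor.} Suppose $a_{ij}$ is encircled and equals exactly one upper neighbor. If that neighbor is the upper-left one, the triangle has the shape $\begin{smallmatrix} a&&b\\ &a\end{smallmatrix}$ with $a<b$. I would want an edge here, but (E5) only gives encirclement. The edge comes instead from (E3), or from the convention that an encircled entry equal to a neighbor is glued to it. I expect this to be where one must read the axioms carefully, and I would check it against~\cite{PresnovaSmirnov24}: the statement of $\cR$ asserts the edge, so it must follow from the axioms together with efficiency. If the neighbor is the upper-right one, the triangle is $\begin{smallmatrix} a&&b\\ &b\end{smallmatrix}$, and (E6) gives the edge directly.
\item \textbf{Equal to both neighbors.} Suppose $a_{ij}$ is encircled and equals both, so we have an $a,a,a$ triangle. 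Efficiency forces the edge to the upper-right neighbor. If the two top entries are joined by a path in rows $\le i-1$, then (E7) forces the edge to the upper-left neighbor as well.
\end{itemize}

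The remaining point, and the main obstacle, is the converse in the last case. If the two top entries are not so joined, we must show there is no edge to the upper-left neighbor. Suppose there were edges to both. Then I would use (E3): the two entries $a_{i-1,j}$ and $a_{i-1,j+1}$ are joined by edges to an entry below them, namely $a_{ij}$. Hence they are joined to the entry above them, $a_{i-2,j+1}$, when $i\ge2$. This produces a path in rows $\le i-1$, a contradiction. When $i=1$, the two top entries are equal, and they count as connected by the convention, which is consistent with (E4).

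With all cases settled, the upward edges of row $i$ agree with $\cR$, which completes the induction. Since every edge is an upward edge from its lower endpoint, this shows the whole edge set is determined by the circles.
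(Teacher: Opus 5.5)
Most of your induction is sound. The case ``equal to neither'' follows from (E2), and the case ``equal only to the upper-right neighbor'' follows from (E6). In the all-equal case, efficiency gives the right edge and (E7) gives the left one when the top entries are connected. Your (E3) argument for the converse is exactly right: two edges below $(i-1,j)$ and $(i-1,j+1)$ force edges from both to $(i-2,j+1)$, hence a path in rows $\le i-1$, and $i=1$ is covered by (E4).

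The gap is the case $a_{ij}=a_{i-1,j}<a_{i-1,j+1}$. Here $\cR$ draws the edge to the upper-left neighbor, and you give no reason why $P$ must contain it. None of your suggested sources supplies it:
\begin{itemize}
\item In this triangle the two upper entries are unequal, so no entry is joined to both, and (E3) says nothing.
\item There is no axiom gluing an encircled entry to an equal neighbor. (E2) explicitly warns that equal neighbors need not be joined. In an all-equal triangle carrying the right edge alone, the encircled bottom entry is not joined to its equal upper-left neighbor.
\item Efficiency only constrains all-equal triangles.
\item ``$\cR$ asserts the edge, so it must follow'' is circular.
\end{itemize}

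In fact, with (E1)--(E8) read literally as in Definition~\ref{def:enhanced}, where (E5) only requires the bottom entry to be encircled, this step is false, and so is the lemma. Take $n=2$, $\lambda=(1,0)$, and the pattern $\begin{smallmatrix}0&&1\\&0\end{smallmatrix}$ with every entry encircled. It satisfies all eight conditions and is efficient both with and without an edge from $a_{11}$ to $a_{0,1}$; (E3) is vacuous there. So the circles do not determine the edges, and \eqref{eq:ps} would count the monomial $x_2$ twice.

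What the lemma actually rests on is a stronger form of the axiom for this triangle, which \cite{PresnovaSmirnov24} must contain for Theorem~\ref{thm:ps} to hold. It says: if $a<b$, the bottom entry of $\begin{smallmatrix} a&&b\\ &a\end{smallmatrix}$ is encircled \emph{and joined to the upper-left $a$}. You need to state and invoke that axiom. With it the case is one line, and your induction is complete.

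One smaller point should be made explicit. For $i\ge 2$, the artificial top-row connections used by $\cR$ are realized in $P$ by the two edges through row $1$ that (E4) provides. Only then does ``joined by a path in rows $\le i-1$'' mean the same thing in $P$ as in $\cR$, which your induction hypothesis needs.
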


We write $\cE_P(i,j)\subseteq\{\Lft,\Rgt\}$ for the set of edges going upwards from the position
$(i,j)$ of $P$, the symbols $\Lft$ and $\Rgt$ standing for the edges to $a_{i-1,j}$ and to
$a_{i-1,j+1}$ respectively; the subscript $P$ is omitted when no confusion can arise. In this
notation the last clause of Definition~\ref{def:rank} reads: $P$ is inefficient if and only if
$\Rgt\notin\cE(i,j)$ for some all-equal triangle with bottom entry $(i,j)$.

To an efficient enhanced pattern $P$ one associates the monomial
\begin{equation}\label{eq:xP}
x^P=\beta^{\rk P}x_1^{d_1}\cdots x_n^{d_n},
\qquad
d_{n+1-i}=S_{i-1}(P)-S_i(P)+D_i(P),
\end{equation}
where $S_i(P)$ is the sum of the entries in row $i$ of $P$, with $S_0(P)=|\lambda|$, and $D_i(P)$
is the number of entries in row $i$ that are not encircled. For an inefficient pattern one sets
$x^P=0$.

\begin{construction}[{\cite[Construction~4.2.1]{PresnovaSmirnov24}}]\label{constr:cells}
Let $P\in\cP(\lambda)$ have entries $a_{ij}$. To each coordinate $y_{ij}$ assign a relation:
\begin{enumerate}
\item if there is an edge from $a_{ij}$ up to $a_{i-1,j}$ \textup{(}resp.\ to
      $a_{i-1,j+1}$\textup{)}, impose $y_{ij}=y_{i-1,j}$ \textup{(}resp.\
      $y_{ij}=y_{i-1,j+1}$\textup{)};
\item if there are no edges going up from $a_{ij}$ and this entry is encircled, impose
      $y_{ij}=a_{ij}$;
\item if there are no edges going up from $a_{ij}$ and this entry is not encircled, impose the two
      strict inequalities
      \begin{enumerate}
      \item $a_{ij}-1<y_{ij}$ if $a_{ij}-a_{i-1,j}\ge2$, and $y_{i-1,j}<y_{ij}$ otherwise;
      \item $y_{ij}<y_{i-1,j+1}$ if $a_{i-1,j+1}=a_{ij}$, and $y_{ij}<a_{ij}$ otherwise.
      \end{enumerate}
\end{enumerate}
Let $\widehat{C_P}\subset\RR^d$ be the set defined by these relations, let $L$ be its affine span,
and put
\[
C_P=\widehat{C_P}\cap\bigl(\GZ(\lambda)\cap L\bigr)^{\circ},
\]
the relative interior being taken in $L$.
\end{construction}

\begin{theorem}[{\cite[Theorems~4.3.1 and~4.3.2]{PresnovaSmirnov24}}]\label{thm:ps}
The cells $C_P$, for $P\in\cP(\lambda)$, form a cellular decomposition $\cC$ of $\GZ(\lambda)$, and
$\dim C_P=\rk P$. Moreover
\begin{equation}\label{eq:ps}
G^{(\beta)}_\lambda(x_1,\dots,x_n)=\sum_{P\in\cP^+(\lambda)}x^P.
\end{equation}
\end{theorem}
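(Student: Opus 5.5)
The plan is to treat the three assertions separately: the decomposition, the dimension count, and formula \eqref{eq:ps}. The first two are geometric facts about Construction~\ref{constr:cells}. The third I would deduce from Theorem~\ref{thm:buch} by a weight-preserving bijection, which is the only place where combinatorics enters.

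\emph{Step 1: every point lies in exactly one cell.} Fix $y\in\GZ(\lambda)$ and reconstruct the enhanced pattern $P$ with $y\in C_P$ row by row from the top.
\begin{itemize}
\item If $y_{ij}$ equals an upper neighbor coordinate, join it by the corresponding edge. When it equals both, join it according to the path condition in (E7)/(E8).
\item If $y_{ij}$ is an integer equal to neither upper neighbor, encircle it with $a_{ij}=y_{ij}$.
\item Otherwise set $a_{ij}=\lceil y_{ij}\rceil$ and leave it unencircled. The two inequalities in clause (3) of Construction~\ref{constr:cells} are designed exactly so that this choice is forced: the alternatives in (3a) and (3b) separate the case where the half-open unit interval $(a_{ij}-1,a_{ij}]$ is cut by a neighbor from the case where it is not.
\end{itemize}
Next I would check axioms (E1)--(E8) for the resulting $P$ case by case, since each axiom is a statement about one small triangle, plus (E3) and (E7) which concern connectivity. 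Finally I would check that the relations of $P$ hold at $y$ and determine $P$ back from $y$; this gives uniqueness.

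\emph{Step 2: dimension and cell structure.} The relations fix every encircled coordinate, either to a constant or to an earlier coordinate. Each non-encircled coordinate is confined to an open interval whose endpoints are constants or earlier coordinates. So, processing rows from top to bottom, $C_P$ is a tower of open-interval fibrations over a point, hence an open ball of dimension equal to the number of non-encircled entries, i.e.\ $\rk P$. Taking the relative interior inside $\GZ(\lambda)\cap L$ removes degenerate boundary strata without changing the dimension. For the cellular (CW) property, I would show that the closure of $C_P$ is the union of the cells $C_{P'}$ obtained by turning some strict inequalities into equalities. Such a degeneration either encircles an entry or adds an edge, so the closure of $C_P$ is a union of cells of smaller rank. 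This is the step I expect to be the main obstacle: one must verify that the degenerated data, after re-applying the tie rules, again satisfy (E3) and (E7), which involve path connectivity across rows. I would attempt it by induction on the row index, using that a limit of paths of edges is again a path.

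\emph{Step 3: formula \eqref{eq:ps}.} I would avoid any direct $K$-theoretic computation. Instead I would construct a bijection between $\cP^+(\lambda)$ and $\SVT_n(\lambda)$ carrying $x^P$ to $\beta^{\ex(T)}x^{\wt(T)}$, and invoke Theorem~\ref{thm:buch}.
\begin{itemize}
\item By Lemma~\ref{lem:redundant} an efficient pattern is determined by its underlying pattern $A$ and the set of non-encircled positions.
\item Under the dictionary \eqref{eq:dict}, $A$ corresponds to the semistandard tableau of cell minima.
\item Each non-encircled position must satisfy the strict inequality \eqref{eq:free-both} with its upper-left neighbor; otherwise (E5) or efficiency forces a circle. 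This position records one excess letter, placed in a uniquely determined cell of the corresponding row.
\end{itemize}
The weight check is then bookkeeping with \eqref{eq:xP}. The difference $S_{i-1}-S_i$ counts the letters $n+1-i$ placed as minima, i.e.\ the size of the horizontal strip $\nu^{(n+1-i)}/\nu^{(n-i)}$, and $D_i$ counts the excess occurrences of the same letter. The remaining difficulty is surjectivity: every choice of free positions must yield a legitimate efficient enhanced pattern under $\cR$. For this I would isolate the observation that an entry equal to its upper-left neighbor must be encircled, which handles most axioms, and treat (E7) separately by propagating path connectivity downward row by row.
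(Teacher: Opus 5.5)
\textbf{The recipe in Step~1 is wrong.} Clause (3a) of Construction~\ref{constr:cells} uses the lower bound $a_{ij}-1$ only when $a_{ij}-a_{i-1,j}\ge 2$. When $a_{ij}=a_{i-1,j}+1$, the lower bound is the moving coordinate $y_{i-1,j}$. A non-encircled coordinate can therefore fall below $a_{ij}-1$ and pass through integers, so its label is not $\lceil y_{ij}\rceil$. Your claim that (3a) and (3b) force the ceiling is a misreading.

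Here is a concrete failure. For $\lambda=(2,1,0)$, let $P$ be the efficient pattern $(1,2,2)$ with $y_{11},y_{21}$ not encircled (the cell of $1\,12/23$). Then $a_{12}$ is joined to the right. Clause (3a) gives $0<y_{11}$ and $y_{11}<y_{21}$, and (3b) gives $y_{11}<1$ and $y_{21}<y_{12}$. So
\[
C_P=\bigl\{(u,2,w)\ :\ 0<u<1,\ u<w<2\bigr\},
\]
which contains both $(0.3,2,0.7)$ and $(0.3,2,1)$.
\begin{itemize}
\item At $(0.3,2,0.7)$ your recipe sets $a_{21}=\lceil 0.7\rceil=1$ unencircled. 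This violates (E5), since $a_{11}=1<a_{12}=2$.
\item At $(0.3,2,1)$ it encircles $a_{21}=1$ with no edge, although the point lies in the $2$-cell $C_P$, where $a_{21}=2$ is unencircled.
\end{itemize}
Either way the output is not $P$, so the recipe does not locate the cell through a point. Neither existence nor uniqueness follows. A correct reconstruction must measure $y_{ij}$ against the label and the coordinate of its upper-left neighbor, not against the integer lattice.

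\textbf{Step~2 inherits the same problem.} Closures are not obtained by turning a cell's own strict inequalities into equalities. Setting $u=0$ in $C_P$ gives $\{(0,2,w):0<w<2\}$. This is not a cell: it is the union of the $1$-cells $\{(0,2,t):0<t<1\}$ and $\{(0,2,t):1<t<2\}$ with the $0$-cell $(0,2,1)$. The reason is that once $a_{11}$ drops to $0$, clause (3a) for $y_{21}$ switches and $y_{21}$ must be relabeled.

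You also never show $C_P\neq\varnothing$, and this is exactly where the axioms enter. Take $(1,1,1)$ with $y_{11}$ unencircled and both edges at $a_{21}$: it gives $y_{21}=y_{11}<1$ and $y_{21}=y_{12}=1$, an empty set, and among the axioms only (E3) excludes it. Once nonemptiness is known, and the equations at doubly joined entries are checked to be redundant, $C_P$ is a nonempty relatively open convex subset of an affine space of dimension $\rk P$. That gives both the ball and the dimension directly, with no fibration argument.

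\textbf{Comparison with the paper.} This paper does not prove Theorem~\ref{thm:ps}. The decomposition and $\dim C_P=\rk P$ are quoted from \cite{PresnovaSmirnov24}. The only part argued here is \eqref{eq:ps}, which Corollary~\ref{cor:equiv} derives from Theorem~\ref{thm:buch} through $\widetilde\Phi=\Psi^{-1}\circ\Phi$, without using the geometry. Your Step~3 follows that route and does not depend on Steps~1--2, so it is the right plan for \eqref{eq:ps}. However, you misplace its difficulty.
\begin{itemize}
\item (E7) costs nothing. By the anchor observation, the bottom of an all-equal triangle is encircled, and (E7) is then literally the connectivity test inside $\cR$.
\item What the anchor observation does not give is the implication $\Leftarrow$ in \eqref{eq:cond3}. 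Two edges at $(i+1,j)$ certify a path from $(i,j)$ to $(i,j+1)$ only in rows $\le i$. But $\Lft\in\cE(i,j+1)$ may require a path from $(i-1,j+1)$ to $(i-1,j+2)$ in rows $\le i-1$.
\end{itemize}
The missing idea is that connectivity is pushed \emph{up}, not down. A row-$i$ vertex in the interior of a path carries both upward edges. By the all-equal branch of $\cR$, it therefore already certifies connectivity one row higher, so every such detour can be replaced (Lemmas~\ref{lem:both} and~\ref{lem:descent}, Theorem~\ref{thm:cond3}).
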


The zero-dimensional cells of $\cC$ are exactly the integer points of $\GZ(\lambda)$, so
\eqref{eq:ps} refines the classical count of Gelfand and Zetlin~\cite{GelfandZetlin50} in the
same way that
\eqref{eq:buch} refines the tableau formula for a Schur polynomial: setting $\beta=0$ in either
kills all cells of positive dimension, respectively all tableaux with excess entries.

Comparing \eqref{eq:buch} with \eqref{eq:ps}, both sums have all coefficients equal to $1$, and both
are indexed by sets whose cardinality is therefore the same. The purpose of the next two sections is
to exhibit a bijection between these two indexing sets which matches the summands term by term.

\section{The bijection}
\label{sec:bijection}

By Lemma~\ref{lem:dict} a Gelfand--Zetlin pattern may be specified either by its entries $a_{ij}$
or by the corresponding chain $\bnu\in\HS(\lambda)$; we pass between the two without further
comment. Recall from \eqref{eq:free-both} that the inequality $a_{ij}>a_{i-1,j}$ between an entry
and its upper-left neighbor reads $\nu^{(k)}_r>\nu^{(k+1)}_{r+1}$ in the language of chains.

\subsection{From set-valued tableaux to marked Gelfand--Zetlin patterns}
\label{ss:phi}

\begin{definition}\label{def:free}
Let $A$ be a Gelfand--Zetlin pattern with top row $\lambda$ and entries $a_{ij}$. A position
$(i,j)$ with $i\ge1$ is \emph{free} for $A$ if
\[
a_{ij}>a_{i-1,j},
\]
that is, if the entry is strictly larger than its upper-left neighbor. We write $\Free(A)$ for the
set of free positions of $A$, and
\[
\cN(\lambda)=\bigl\{(A,U)\ :\ A \text{ a Gelfand--Zetlin pattern with top row } \lambda,\
U\subseteq\Free(A)\bigr\}
\]
for the set of \emph{marked} Gelfand--Zetlin patterns.
\end{definition}

The point of the definition is that, by Definition~\ref{def:rank} and Lemma~\ref{lem:redundant}, an
efficient enhanced pattern is determined by its underlying pattern together with the set of its
non-encircled entries; Proposition~\ref{prop:eff} below shows that this set is always contained in
$\Free$. Marked patterns are thus a candidate description of $\cP^+(\lambda)$ from which all
reference to edges has been removed.

We now attach a marked pattern to a set-valued tableau. Two pieces of data are extracted: the
minima of the cells, which produce the pattern, and the remaining entries, which produce the marks.

\begin{definition}\label{def:phi}
Let $T\in\SVT_n(\lambda)$.
\begin{enumerate}
\item The \emph{minimum tableau} $m(T)$ is the ordinary filling with $m(T)(r,c)=\min T(r,c)$.
\item For $0\le k\le n$ let $\nu^{(k)}(T)$ be the shape formed by those cells $(r,c)$ of $\lambda$
      with $m(T)(r,c)\le k$, and let $\bnu(T)=(\nu^{(0)}(T),\dots,\nu^{(n)}(T))$.
\item Call a pair $(k,r)$ with $1\le k\le n-1$ and $r\ge1$ \emph{excessive} for $T$ if the letter
      $k+1$ occurs as a non-minimal element of some cell in row $r$ of $T$. Let
      \[
      U(T)=\bigl\{(i,j)\ :\ i=n-k,\ j=k-r+1 \text{ for some pair } (k,r)
      \text{ excessive for } T\bigr\}
      \]
      be the corresponding set of positions, in the indexing of \eqref{eq:dict}.
\end{enumerate}
We set $\Phi(T)=\bigl(\bnu(T),U(T)\bigr)$.
\end{definition}

\begin{lemma}\label{lem:min}
For every $T\in\SVT_n(\lambda)$ the minimum tableau $m(T)$ is semistandard.
\end{lemma}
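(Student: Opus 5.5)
The plan is to check directly the two inequalities that characterize a semistandard filling, namely weak increase along rows and strict increase down columns, and to obtain each of them from the corresponding inequality in \eqref{eq:svt} by comparing minima.

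First, rows. Let $(r,c)$ and $(r,c+1)$ be cells of $\lambda$. By \eqref{eq:svt}, $\max T(r,c)\le\min T(r,c+1)$. Since the sets are non-empty, $\min T(r,c)\le\max T(r,c)$. Chaining the two gives $m(T)(r,c)\le m(T)(r,c+1)$.

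Second, columns. Let $(r,c)$ and $(r+1,c)$ be cells of $\lambda$. The second inequality of \eqref{eq:svt} gives $\max T(r,c)<\min T(r+1,c)$. Combined with $\min T(r,c)\le\max T(r,c)$, this yields $m(T)(r,c)<m(T)(r+1,c)$.

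Finally, the entries of $m(T)$ lie in $[n]$ because each $T(r,c)\subseteq[n]$, and $m(T)$ has shape $\lambda$. So $m(T)$ is a semistandard Young tableau.

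Alternatively, one can avoid \eqref{eq:svt} and argue from Definition~\ref{def:svt} itself: $m(T)$ is obtained by choosing one element (the minimum) in each cell, so it is semistandard by definition. I would present this one-line argument as the proof and mention the computation above as the check that \eqref{eq:svt} is consistent with it.

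There is no real obstacle here. The only point needing care is that \eqref{eq:svt} is stated as an equivalent form of the definition, so either route is legitimate; I would use the definitional one to avoid depending on that equivalence.
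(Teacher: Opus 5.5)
Your proposal is correct. The row and column computation via \eqref{eq:svt} is exactly the paper's proof: it chains $\min T(r,c)\le\max T(r,c)$ with the two inequalities of \eqref{eq:svt}.

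The one-line argument you prefer is a different and slightly shorter route. Taking the minimum in every cell is one particular choice of one element per cell, so Definition~\ref{def:svt} directly says that $m(T)$ is semistandard. It is fully valid. It has the small advantage you point out: it does not rely on the equivalence between Definition~\ref{def:svt} and \eqref{eq:svt}, which the paper asserts without proof. The paper's version instead keeps everything in the language of \eqref{eq:svt}, which is the form used in the later arguments (Lemma~\ref{lem:host}, Lemma~\ref{lem:nec}, and Step~1 of Theorem~\ref{thm:bij1}). Either presentation is fine.
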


\begin{proof}
By \eqref{eq:svt} we have $\min T(r,c)\le\max T(r,c)\le\min T(r,c+1)$, so the rows of $m(T)$ weakly
increase, and $\min T(r,c)\le\max T(r,c)<\min T(r+1,c)$, so its columns strictly increase.
\end{proof}

\begin{lemma}\label{lem:chain}
For every $T\in\SVT_n(\lambda)$ we have $\bnu(T)\in\HS(\lambda)$.
\end{lemma}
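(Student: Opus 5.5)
The argument reduces the statement to the classical fact that a semistandard tableau is the same thing as a chain of horizontal strips, applied to the minimum tableau. By Lemma~\ref{lem:min}, $m(T)$ is a semistandard tableau of shape $\lambda$ with entries in $[n]$. The shape $\nu^{(k)}(T)$ depends only on $m(T)$, so it suffices to show that for any semistandard tableau $S$ of shape $\lambda$ with entries in $[n]$, the shapes $\nu^{(k)}=\{(r,c): S(r,c)\le k\}$ form a chain in $\HS(\lambda)$.

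First we check that each $\nu^{(k)}$ is a partition, i.e.\ a Young diagram. Suppose $(r,c)\in\nu^{(k)}$ with $c>1$. Since rows of $S$ weakly increase, $S(r,c-1)\le S(r,c)\le k$. If $r>1$, then since columns strictly increase, $S(r-1,c)<S(r,c)\le k$. So $\nu^{(k)}$ is closed under moving left and up. We write $\nu^{(k)}_r$ for the number of entries $\le k$ in row $r$; because rows weakly increase, these entries form an initial segment of the row. The boundary cases are immediate. The set $\nu^{(0)}$ is empty because all entries are $\ge1$, and $\nu^{(n)}=\lambda$ because all entries are $\le n$. The inclusions $\nu^{(k-1)}\subseteq\nu^{(k)}$ are obvious.

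Next we verify the horizontal strip inequalities \eqref{eq:hstrip}, namely $\nu^{(k)}_r\ge\nu^{(k-1)}_r\ge\nu^{(k)}_{r+1}$. The first inequality is the inclusion just noted. For the second, set $c=\nu^{(k)}_{r+1}$ and assume $c\ge1$. Then $S(r+1,c)\le k$. Column strictness gives $S(r,c)<S(r+1,c)\le k$, so $S(r,c)\le k-1$. Hence the first $c$ cells of row $r$ lie in $\nu^{(k-1)}$, which gives $\nu^{(k-1)}_r\ge c$. This is the only place where strictness of columns is used, and it is exactly the point that needs care: a weak column inequality would only give $\le k$. Together these facts give $\bnu(T)\in\HS(\lambda)$.

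One small bookkeeping point remains, and I expect it to be the only potential obstacle. Definition~\ref{def:phi} ranges over $0\le k\le n$, while \eqref{eq:hstrip} is required for all $k,r$. For rows $r$ beyond the length of $\lambda$, all the parts involved vanish, so the inequalities hold trivially. The case $r+1>n$ is harmless for the same reason.
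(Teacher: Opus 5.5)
Your proof is correct and follows essentially the same route as the paper's: both pass to the semistandard minimum tableau $m(T)$, read $\nu^{(k)}_r$ as the length of the initial segment of row $r$ with entries $\le k$, and get the right-hand inequality of \eqref{eq:hstrip} from column-strictness, which forces $m(T)(r,c)\le k-1$. The only cosmetic difference is that you check the Young-diagram property directly (closure under moving left and up), whereas the paper deduces $\nu^{(k)}_{r+1}\le\nu^{(k)}_r$ from the strip inequality itself.
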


\begin{proof}
Since the rows of $m(T)$ weakly increase, the cells of row $r$ with entry at most $k$ form an
initial segment of that row; let $\nu^{(k)}_r$ be its length, so that $\nu^{(k)}(T)$ is indeed a
Young diagram row by row. Clearly $\nu^{(k-1)}_r\le\nu^{(k)}_r$, and $\nu^{(0)}=\varnothing$,
$\nu^{(n)}=\lambda$ because all entries lie in $[n]$.

For the horizontal-strip condition \eqref{eq:hstrip}, suppose $m(T)(r+1,c)\le k$. As the columns of
$m(T)$ strictly increase, $m(T)(r,c)<m(T)(r+1,c)\le k$, whence $m(T)(r,c)\le k-1$. Therefore
$\nu^{(k)}_{r+1}\le\nu^{(k-1)}_r$, which is the right-hand inequality of \eqref{eq:hstrip}; taking
$k-1\le k$ also gives $\nu^{(k)}_{r+1}\le\nu^{(k)}_r$, so each $\nu^{(k)}$ is a partition.
\end{proof}

The following elementary observation is used repeatedly: an excess letter has no freedom about
where in its row it may sit.

\begin{lemma}[Host cell]\label{lem:host}
Fix a row $r$ of $T$ and an index $k\in[n-1]$. If the letter $k+1$ is a non-minimal element of the
cell $(r,c)$, then $c=\nu^{(k)}_r(T)$. In particular, for fixed $r$ and $k$ there is at most one
such cell, and it is the rightmost cell of row $r$ whose minimum is at most $k$.
\end{lemma}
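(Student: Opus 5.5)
The plan is to squeeze $c$ between two inequalities coming from the set-valued conditions \eqref{eq:svt}, using the fact that $\nu^{(k)}_r(T)$ is the length of the initial segment of row $r$ of $m(T)$ consisting of entries $\le k$. This was established in the proof of Lemma~\ref{lem:chain} from the weak increase of the rows of $m(T)$ (Lemma~\ref{lem:min}). Concretely, $c\le\nu^{(k)}_r$ holds if and only if $m(T)(r,c)\le k$, and $c+1>\nu^{(k)}_r$ holds if and only if either $c+1>\lambda_r$ or $m(T)(r,c+1)\ge k+1$.

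The first step gives the upper bound. Since $k+1\in T(r,c)$ is not the minimum of that cell, we have $\min T(r,c)<k+1$, that is, $m(T)(r,c)\le k$. Hence $c\le\nu^{(k)}_r(T)$.

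The second step gives the lower bound. If $c=\lambda_r$, then $c\ge\nu^{(k)}_r$ trivially, since $\nu^{(k)}_r\le\lambda_r$. Otherwise the cell $(r,c+1)$ exists, and \eqref{eq:svt} gives
\[
\min T(r,c+1)\ \ge\ \max T(r,c)\ \ge\ k+1.
\]
So $m(T)(r,c+1)\ge k+1$, which means the cell $(r,c+1)$ is not counted in $\nu^{(k)}_r$. Because the counted cells form an initial segment of the row, this forces $\nu^{(k)}_r\le c$. Combining the two steps yields $c=\nu^{(k)}_r(T)$.

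The remaining assertions follow at once. The value $\nu^{(k)}_r(T)$ depends only on $r$ and $k$, so there is at most one such cell. By definition, the cell $(r,\nu^{(k)}_r)$ is the rightmost cell of row $r$ whose minimum is at most $k$; it exists because $c\ge1$ satisfies $m(T)(r,c)\le k$.

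I do not expect a real obstacle here. The only point needing care is the boundary case $c=\lambda_r$, where no cell to the right exists, and that case is handled above.
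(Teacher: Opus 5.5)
Your argument is correct and is essentially the paper's: the upper bound $c\le\nu^{(k)}_r$ comes from $\min T(r,c)\le k$, and the reverse bound from $\min T(r,c+1)\ge\max T(r,c)\ge k+1$ via the row condition of \eqref{eq:svt}. The paper phrases the second step as a contradiction from assuming $c<\nu^{(k)}_r$. That assumption already guarantees the cell $(r,c+1)$ exists, so your separate treatment of the boundary case $c=\lambda_r$ is not needed there.
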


\begin{proof}
As $k+1$ is non-minimal in $T(r,c)$ we have $\min T(r,c)<k+1$, i.e.\ $\min T(r,c)\le k$, and hence
$c\le\nu^{(k)}_r$. Suppose $c<\nu^{(k)}_r$. Then the cell $(r,c+1)$ exists and satisfies
$\min T(r,c+1)\le k$, whereas \eqref{eq:svt} gives $\min T(r,c+1)\ge\max T(r,c)\ge k+1$, a
contradiction.
\end{proof}

\begin{lemma}\label{lem:nec}
$U(T)\subseteq\Free\bigl(\bnu(T)\bigr)$ for every $T\in\SVT_n(\lambda)$.
\end{lemma}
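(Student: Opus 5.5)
Fix an excessive pair $(k,r)$ for $T$, so that the letter $k+1$ is a non-minimal element of some cell $(r,c)$. By \eqref{eq:free-both}, the position $(i,j)=(n-k,k-r+1)$ is free exactly when $\nu^{(k)}_r(T)>\nu^{(k+1)}_{r+1}(T)$. The plan is to show $c\le\nu^{(k)}_r$, which is immediate, and that $\nu^{(k+1)}_{r+1}<c$. Together these give the required strict inequality.

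The first step is the Host cell Lemma~\ref{lem:host}, which gives $c=\nu^{(k)}_r(T)\ge1$. This locates the excess letter in the last cell of row $r$ whose minimum is at most $k$.

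The second step uses column strictness. If the cell $(r+1,c)$ does not exist in $\lambda$, then $\lambda_{r+1}<c$. Since $\nu^{(k+1)}_{r+1}\le\lambda_{r+1}$, we get $\nu^{(k+1)}_{r+1}<c=\nu^{(k)}_r$. If the cell $(r+1,c)$ does exist, then \eqref{eq:svt} gives
\[
\min T(r+1,c)>\max T(r,c)\ge k+1,
\]
so $m(T)(r+1,c)\ge k+2$. Because the rows of $m(T)$ weakly increase (Lemma~\ref{lem:min}), every cell $(r+1,c')$ with $c'\ge c$ also has minimum at least $k+2$. Hence the initial segment of row $r+1$ with minima at most $k+1$ has length less than $c$, that is, $\nu^{(k+1)}_{r+1}\le c-1<\nu^{(k)}_r$.

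This proves that $(i,j)$ is free. It remains to check that the position is legitimate in the indexing of \eqref{eq:dict}, namely $i\ge1$ and $1\le r\le k$. The bound $i\ge1$ holds because $k\le n-1$. The bound $r\le k$ holds because the cell $(r,c)$ has minimum at most $k$, and column strictness of $m(T)$ forces a cell in row $r$ to have entry at least $r$. I expect no real obstacle here; the only point needing care is this boundary bookkeeping, including the case where the cell below does not exist.
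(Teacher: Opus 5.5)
Your proof is correct and follows the paper's argument: Lemma~\ref{lem:host} places the excess letter at $c=\nu^{(k)}_r$, and column-strictness against the cell $(r+1,c)$ gives $\nu^{(k+1)}_{r+1}<c$. The paper argues by contradiction instead: assuming $\nu^{(k+1)}_{r+1}\ge c$ forces the cell $(r+1,c)$ to exist with minimum at most $k+1$, which absorbs your case split on whether that cell exists, and your check that $r\le k$ is a small addition the paper leaves implicit.
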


\begin{proof}
Let $(k,r)$ be excessive and put $c=\nu^{(k)}_r$, so that by Lemma~\ref{lem:host} the letter $k+1$
is a non-minimal element of $T(r,c)$; in particular $c\ge1$. Assume for contradiction that
$\nu^{(k+1)}_{r+1}\ge c$. Then the cell $(r+1,c)$ exists and $\min T(r+1,c)\le k+1$, whereas
\eqref{eq:svt} gives $\min T(r+1,c)>\max T(r,c)\ge k+1$. This contradiction shows
$\nu^{(k+1)}_{r+1}<c=\nu^{(k)}_r$, which by \eqref{eq:free-both} is precisely the statement that the
corresponding position is free.
\end{proof}

\begin{theorem}\label{thm:bij1}
The map $\Phi\colon\SVT_n(\lambda)\to\cN(\lambda)$ is a bijection.
\end{theorem}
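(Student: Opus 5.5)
We will prove Theorem~\ref{thm:bij1} by writing down an explicit inverse $\Theta\colon\cN(\lambda)\to\SVT_n(\lambda)$. By Lemmas~\ref{lem:chain} and~\ref{lem:nec}, $\Phi$ does land in $\cN(\lambda)$. Given $(A,U)$, we first pass through the dictionary \eqref{eq:dict} to a chain $\bnu\in\HS(\lambda)$. We then let $S$ be the semistandard tableau whose cell $(r,c)$ contains the unique $k$ with $\nu^{(k-1)}_r<c\le\nu^{(k)}_r$. Finally, for each marked position $(i,j)\in U$, with $k=n-i$ and $r=k-j+1$, we add the letter $k+1$ to the cell $(r,\nu^{(k)}_r)$. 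This host cell exists because freeness gives $\nu^{(k)}_r>\nu^{(k+1)}_{r+1}\ge0$. Lemma~\ref{lem:host} shows this is the only possible choice, so the construction is forced.

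The main step is checking that $\Theta(A,U)$ is a set-valued tableau. Write $c=\nu^{(k)}_r$. The minimum of the host cell is at most $k$, so $k+1$ is indeed a new, non-minimal letter. We then verify \eqref{eq:svt} at the places where the host cell could now fail.

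For the right neighbour $(r,c+1)$: since $c+1>\nu^{(k)}_r$, its entry in $S$ is at least $k+1$. Several letters $k+1,k'+1,\dots$ may be added to the same cell. So we must check that the maximum of the cell $(r,c)$ stays at most $\min S(r,c+1)$. If the letters $k+1$ and $k'+1$ with $k<k'$ both go to $(r,c)$, then $\nu^{(k)}_r=\nu^{(k')}_r=c$. Hence $S(r,c+1)>k'$, and the row condition holds.

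For the cell below, $(r+1,c)$: freeness gives $\nu^{(k+1)}_{r+1}<c$, so $S(r+1,c)\ge k+2>k+1$. Any excess letter $k''+1$ placed into $(r+1,c)$ itself is larger than its own minimum, so it does not interfere. The left and upper neighbours only need their maxima compared with this cell's minimum, and that minimum is unchanged.

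This is precisely the computation in Lemma~\ref{lem:nec} run in reverse, and I expect it to be the main obstacle. The part needing the most care is bookkeeping several marks that share a host cell, or marks placed in vertically adjacent host cells. In the latter case we compare $\max$ of $(r,c)$, which includes all of its added letters, with $\min$ of $(r+1,c)$. The largest letter added to $(r,c)$ is $k'+1$ with $\nu^{(k')}_r=c$ and $(k',r)$ free, so $\nu^{(k'+1)}_{r+1}<c$, and therefore $\min S(r+1,c)\ge k'+2$.

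It remains to check that the two maps are mutually inverse. For $\Phi\circ\Theta=\mathrm{id}$: adding non-minimal letters does not change minima, so $m(\Theta(A,U))=S$ and $\bnu$ is recovered. The excessive pairs are exactly the marked ones, so $U$ is recovered as well. For $\Theta\circ\Phi=\mathrm{id}$: $T$ is determined by $m(T)$ together with, for each excessive $(k,r)$, the location of the letter $k+1$ in row $r$, and Lemma~\ref{lem:host} says that location is $(r,\nu^{(k)}_r)$. Every non-minimal letter of $T$ arises from some excessive pair, and a cell cannot contain the same letter twice. Hence $T=\Theta(\Phi(T))$.
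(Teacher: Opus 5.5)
Your proposal is correct and takes essentially the same route as the paper. The inverse places $\mu(r,c)=\min\{k:\nu^{(k)}_r\ge c\}$ in each cell and adjoins $k+1$ to the host cell $(r,\nu^{(k)}_r)$ for each mark. The row condition follows from $\mu(r,c+1)\ge k+1$, the column condition from freeness $\nu^{(k+1)}_{r+1}<c$, and Lemma~\ref{lem:host} shows the two compositions are identities.

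The only differences are small. You assume that the minimum filling $S$ is semistandard, whereas the paper checks column-strictness from \eqref{eq:hstrip}. In turn, you make explicit that the host cell exists, i.e.\ $\nu^{(k)}_r\ge 1$, which the paper leaves implicit.
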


\begin{proof}
By Lemmas~\ref{lem:chain} and~\ref{lem:nec} the map indeed takes values in $\cN(\lambda)$.

\emph{Construction of the inverse.} Let $(\bnu,U)\in\cN(\lambda)$, and let $E$ be the set of pairs
$(k,r)$ corresponding to the positions of $U$ under \eqref{eq:dict}. For a cell $(r,c)$ of
$\lambda$ put
\[
\mu(r,c)=\min\bigl\{k\ :\ \nu^{(k)}_r\ge c\bigr\},
\]
which is well defined because $\nu^{(n)}=\lambda$, and define
\begin{equation}\label{eq:inverse}
T(r,c)=\{\mu(r,c)\}\ \cup\ \bigl\{k+1\ :\ (k,r)\in E,\ \nu^{(k)}_r=c\bigr\}.
\end{equation}

\emph{Step 1: $T\in\SVT_n(\lambda)$.} Every $T(r,c)$ is a non-empty subset of $[n]$: it contains
$\mu(r,c)$, and any adjoined element $k+1$ satisfies $k\le n-1$. Moreover
$\min T(r,c)=\mu(r,c)$, since an adjoined $k+1$ has $\nu^{(k)}_r=c$ and therefore
$\mu(r,c)\le k<k+1$.

For the row condition of \eqref{eq:svt}, let $c<\lambda_r$. Then $\mu(r,c)\le\mu(r,c+1)$ by
definition of $\mu$. If $k+1\in T(r,c)$ is adjoined, then $\nu^{(k)}_r=c<c+1$, so
$\mu(r,c+1)\ge k+1$. Hence $\max T(r,c)\le\mu(r,c+1)=\min T(r,c+1)$.

For the column condition, suppose the cell $(r+1,c)$ exists and put $k'=\mu(r+1,c)$, so that
$\nu^{(k')}_{r+1}\ge c$. By \eqref{eq:hstrip} we get $\nu^{(k'-1)}_r\ge\nu^{(k')}_{r+1}\ge c$ and
therefore $\mu(r,c)\le k'-1<k'$. If $k+1\in T(r,c)$ is adjoined, then $(k,r)\in E$, so the
corresponding position is free and $\nu^{(k+1)}_{r+1}<\nu^{(k)}_r=c$ by \eqref{eq:free-both}; hence
$\mu(r+1,c)\ge k+2>k+1$. Together these give $\max T(r,c)<\min T(r+1,c)$.

\emph{Step 2: $\Phi(T)=(\bnu,U)$.} Since $\min T(r,c)=\mu(r,c)$, we get
$\nu^{(k)}_r(T)=\#\{c:\mu(r,c)\le k\}=\nu^{(k)}_r$, so $\bnu(T)=\bnu$. The non-minimal elements
occurring in row $r$ of $T$ are exactly the adjoined letters $k+1$ with $(k,r)\in E$ — they are
non-minimal because $\mu(r,\nu^{(k)}_r)\le k$ — so $U(T)=U$.

\emph{Step 3: injectivity.} Let $T'\in\SVT_n(\lambda)$ satisfy $\Phi(T')=(\bnu,U)$. From
$\bnu(T')=\bnu$ we get $\min T'(r,c)=\mu(r,c)$ for every cell. By Lemma~\ref{lem:host} the
non-minimal elements of $T'$ in row $r$ are precisely the letters $k+1$ with $(k,r)\in E$, each
lying in the cell $(r,\nu^{(k)}_r)$. Comparing with \eqref{eq:inverse} gives $T'=T$.
\end{proof}

\begin{example}\label{ex:phi}
Let $n=3$, $\lambda=(2,1,0)$ and
\[
T=\ \ytableaushort{1{13},2}\ ,\qquad\text{that is } T(1,1)=\{1\},\ T(1,2)=\{1,3\},\ T(2,1)=\{2\}.
\]
Then $m(T)=\ytableaushort{11,2}$, so $\nu^{(1)}=(2,0)$, $\nu^{(2)}=\nu^{(3)}=(2,1)$ and the pattern is
\[
\begin{matrix}
0 && 1 && 2\\
 & 1 && 2\\
 && 2
\end{matrix}
\]
in the notation of \eqref{eq:gz-tableau}. The only non-minimal element of $T$ is the letter $3$ in
the cell $(1,2)$, giving the excessive pair $(k,r)=(2,1)$ and hence the single mark at position
$(i,j)=(1,2)$, the entry $a_{12}=2$. It is free, since its upper-left neighbor is $a_{0,2}=1$.
\end{example}

\subsection{Excess, weight, and the monomial of a pattern}
\label{ss:statistics}

We now check that $\Phi$ transports the two statistics carried by a set-valued tableau, namely
$\ex$ and $\wt$, to the two statistics carried by an enhanced pattern, namely its rank and the
exponents in \eqref{eq:xP}.

\begin{theorem}\label{thm:stats}
Let $T\in\SVT_n(\lambda)$ and $(\bnu,U)=\Phi(T)$. Then
\[
\ex(T)=|U|
\]
and, for $1\le m\le n$,
\[
\wt(T)_m=\bigl|\nu^{(m)}\bigr|-\bigl|\nu^{(m-1)}\bigr|
+\#\bigl\{r\ :\ (m-1,r)\text{ is excessive for } T\bigr\},
\]
the last term being empty for $m=1$.
\end{theorem}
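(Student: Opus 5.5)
The plan is to count the letters of $T$ according to whether they are the minimum of their cell, using Lemma~\ref{lem:host} to locate the non-minimal ones.

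First, $\ex(T)=\sum_{(r,c)}(|T(r,c)|-1)$ is the total number of non-minimal occurrences of letters in $T$. Each such occurrence is a letter $k+1$ with $1\le k\le n-1$ (a non-minimal letter exceeds the minimum, which is at least $1$, so it is at least $2$), sitting in some row $r$. We therefore obtain a map from non-minimal occurrences to excessive pairs $(k,r)$. By definition of excessiveness this map is surjective. By Lemma~\ref{lem:host} it is injective, because for fixed $(k,r)$ the cell is forced to be $(r,\nu^{(k)}_r)$. Finally, the dictionary \eqref{eq:dict} identifies excessive pairs bijectively with the positions in $U(T)$: the assignment $(k,r)\mapsto(n-k,k-r+1)$ is injective, and $r\le k$ holds since the host cell has minimum $\le k$ in row $r$, forcing $r\le k$ by column strictness. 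Hence $\ex(T)=|U|$.

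For the weight, we split $\wt(T)_m$ into the number of cells whose minimum equals $m$, plus the number of cells containing $m$ as a non-minimal element. The first count is the number of cells of $m(T)$ with entry exactly $m$. By Definition~\ref{def:phi} these are the cells of $\nu^{(m)}(T)\setminus\nu^{(m-1)}(T)$, so the count is $|\nu^{(m)}|-|\nu^{(m-1)}|$. For the second count, write $m=k+1$. By Lemma~\ref{lem:host} there is at most one cell per row containing $m$ non-minimally, and such a cell exists in row $r$ exactly when $(m-1,r)$ is excessive. This gives the stated term, which is empty for $m=1$, since $1$ is never non-minimal.

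There is no real obstacle. The only point requiring care is that Lemma~\ref{lem:host} gives uniqueness of the host cell per pair $(k,r)$, so that occurrences are not double counted.
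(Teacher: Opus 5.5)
Your proposal is correct and is essentially the paper's proof: Lemma~\ref{lem:host} puts the non-minimal occurrences in bijection with the excessive pairs. Likewise, $\wt(T)_m$ is split into minimal occurrences, counted by $|\nu^{(m)}|-|\nu^{(m-1)}|$, and non-minimal ones, one for each excessive pair $(m-1,r)$. Your check that $r\le k$ is harmless but not needed for $|U|$, since $(k,r)\mapsto(n-k,k-r+1)$ is injective on all integer pairs.
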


\begin{proof}
By Lemma~\ref{lem:host} the non-minimal elements of $T$ are in bijection with the excessive pairs:
the pair $(k,r)$ corresponds to the letter $k+1$ lying in the cell $(r,\nu^{(k)}_r)$, and distinct
pairs give distinct occurrences. Hence
$\ex(T)=\sum_{(r,c)}\bigl(|T(r,c)|-1\bigr)=|U|$.

For the second identity, the letter $m$ occurs in $T$ either as the minimum of its cell or not. The
cells whose minimum equals $m$ are those counted by $|\nu^{(m)}|-|\nu^{(m-1)}|$, and the non-minimal
occurrences of $m$ correspond, again by Lemma~\ref{lem:host}, to the excessive pairs $(m-1,r)$, one
for each admissible $r$. Adding the two counts gives $\wt(T)_m$. For $m=1$ there are no non-minimal
occurrences, since $1$ is the minimum of any set containing it.
\end{proof}

\begin{corollary}\label{cor:monomial}
Let $P$ be an enhanced pattern whose underlying Gelfand--Zetlin pattern corresponds to $\bnu$ and
whose set of non-encircled entries is $U$, and suppose $(\bnu,U)\in\cN(\lambda)$. Then
\[
\rk P=\ex(T)
\qquad\text{and}\qquad
x^P=\beta^{\ex(T)}x^{\wt(T)},
\qquad\text{where } T=\Phi^{-1}(\bnu,U).
\]
\end{corollary}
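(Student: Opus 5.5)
Both identities come from comparing the definitions \eqref{eq:xP} and Definition~\ref{def:rank} with Theorem~\ref{thm:stats}, translating through the dictionary \eqref{eq:dict}. By hypothesis the non-encircled entries of $P$ are exactly the positions of $U$. Hence $\rk P=|U|$, and Theorem~\ref{thm:stats} gives $|U|=\ex(T)$. This settles the first identity and also shows that the power of $\beta$ in $x^P$ is $\beta^{\ex(T)}$.

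For the $x$-part we fix $m\in[n]$ and set $i=n+1-m$, so that \eqref{eq:xP} reads
\[
d_m=S_{i-1}(P)-S_i(P)+D_i(P).
\]
By Lemma~\ref{lem:dict}, row $i$ of the pattern, read right to left, is the partition $\nu^{(n-i)}=\nu^{(m-1)}$. Row $i-1$ is $\nu^{(m)}$; for $i=1$ this is the top row $\lambda=\nu^{(n)}$, matching the convention $S_0(P)=|\lambda|$. Therefore $S_{i-1}(P)-S_i(P)=|\nu^{(m)}|-|\nu^{(m-1)}|$. Here we use that entries $\nu^{(k)}_r$ with $r>k$ vanish, so row sums equal partition sizes. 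For $m=1$, i.e.\ $i=n$, the row is empty, so $S_n=0=|\nu^{(0)}|$.

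It remains to identify $D_i(P)$, the number of marks of $U$ in row $i$. A position $(i,j)\in U$ corresponds under \eqref{eq:dict} to the pair $(k,r)=(n-i,\,n-i-j+1)=(m-1,r)$. By Definition~\ref{def:phi} and Theorem~\ref{thm:bij1} (Step~2, $U(T)=U$), these are exactly the excessive pairs $(m-1,r)$ of $T$. As $j$ ranges over row $i$, the corresponding $r$ are distinct, so
\[
D_i(P)=\#\{r:(m-1,r)\text{ excessive for }T\}.
\]
For $m=1$ this count is zero, since row $n$ does not exist in the pattern. Comparing with Theorem~\ref{thm:stats} gives $d_m=\wt(T)_m$, and hence $x^P=\beta^{\ex(T)}x^{\wt(T)}$.

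One point needs care. Definition~\ref{def:rank} sets $x^P=0$ for inefficient $P$, while the formula \eqref{eq:xP} is stated for efficient ones. We read the corollary as asserting that the expression \eqref{eq:xP} equals $\beta^{\ex(T)}x^{\wt(T)}$. Alternatively, we assume efficiency, which Proposition~\ref{prop:eff} will later guarantee. The main (mild) obstacle is only the index bookkeeping: checking that $k=n-i$ sends row $i$ to the letter $m=k+1$ and treating the boundary rows $i=1$ and $i=n$ correctly. Everything else is direct substitution.
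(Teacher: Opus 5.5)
Your proof is correct and follows the paper's own argument: $\rk P=|U|=\ex(T)$ by Theorem~\ref{thm:stats}, then $S_i(P)=|\nu^{(n-i)}|$ and $D_i(P)=\#\{r:(n-i,r)\text{ excessive}\}$ via the dictionary, substituted into \eqref{eq:xP} with $m=n+1-i$; your explicit check of the boundary row $i=n$ is a welcome addition.

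Your caveat about efficiency is genuinely needed, not pedantic. For $\lambda=(2,1,0)$, take the pattern $(1,1,1)$ with only $a_{11}$ unencircled, $a_{12}$ joined to $a_{02}$, and $a_{21}$ joined to $a_{11}$ alone. This is an inefficient enhanced pattern with $U\subseteq\Free$, so $x^P=0$ there. However, efficiency is not supplied by Proposition~\ref{prop:eff}, which runs in the opposite direction. It comes from the fact that the corollary is only ever applied to $P=\widetilde\Phi(T)\in\cP^+(\lambda)$.
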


\begin{proof}
The first equality is Definition~\ref{def:rank} together with Theorem~\ref{thm:stats}. For the
second, recall from Lemma~\ref{lem:dict} that row $i$ of $P$ consists of the parts of
$\nu^{(n-i)}$, whence $S_i(P)=\bigl|\nu^{(n-i)}\bigr|$; this is consistent at $i=0$, where
$S_0(P)=|\lambda|=\bigl|\nu^{(n)}\bigr|$. A position of $U$ lies in row $i$ exactly when the
corresponding excessive pair has $k=n-i$, so
\[
D_i(P)=\#\bigl\{r\ :\ (n-i,r)\text{ is excessive for } T\bigr\}.
\]
Substituting $m=n+1-i$ into the definition \eqref{eq:xP} of the exponents,
\[
d_m=S_{n-m}(P)-S_{n+1-m}(P)+D_{n+1-m}(P)
=\bigl|\nu^{(m)}\bigr|-\bigl|\nu^{(m-1)}\bigr|
+\#\bigl\{r:(m-1,r)\text{ excessive}\bigr\},
\]
which equals $\wt(T)_m$ by Theorem~\ref{thm:stats}.
\end{proof}

\subsection{Efficiency is column-strictness}
\label{ss:efficiency}

It remains to compare $\cN(\lambda)$ with $\cP^+(\lambda)$. One inclusion is a direct consequence of
the axioms.

\begin{prop}\label{prop:eff}
Let $P\in\cP^+(\lambda)$ have entries $a_{ij}$ and let $U$ be its set of non-encircled entries.
Then $U\subseteq\Free(P)$; that is, $a_{ij}>a_{i-1,j}$ for every $(i,j)\in U$.
\end{prop}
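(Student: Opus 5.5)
We argue by contraposition. Let $(i,j)$ with $i\ge1$ be a position where $a_{ij}$ is not strictly larger than its upper-left neighbor; we show that $a_{ij}$ is encircled. This is the ``Anchor'' observation announced in the introduction. By \eqref{eq:gz} we have $a_{i-1,j}\le a_{ij}\le a_{i-1,j+1}$, so the failure of freeness means $a_{ij}=a_{i-1,j}$. Put $a=a_{i-1,j}$ and $b=a_{i-1,j+1}$. The triangle with bottom entry $(i,j)$ is then $\begin{smallmatrix} a&&b\\ &a\end{smallmatrix}$ with $a\le b$, and we split into the two cases $a<b$ and $a=b$.

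\emph{Case $a<b$.} The triangle is exactly of the form covered by axiom (E5), which forces the bottom entry to be encircled. Efficiency is not needed here.

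\emph{Case $a=b$.} The triangle is all-equal, $\begin{smallmatrix} a&&a\\ &a\end{smallmatrix}$. Since $P$ is efficient, Definition~\ref{def:rank} says the bottom entry is joined by an edge to the upper-right entry, that is, $\Rgt\in\cE_P(i,j)$. Axiom (E2) says that the lower endpoint of any edge is encircled. Hence $a_{ij}$ is encircled.

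In both cases $(i,j)\notin U$. Equivalently, every non-encircled position satisfies $a_{ij}>a_{i-1,j}$, which gives $U\subseteq\Free(P)$.

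There is no real obstacle in this argument. The one point to check carefully is that the upper-left and upper-right conventions of \eqref{eq:gz-tableau} match the way the triangles are written in (E5) and in Definition~\ref{def:rank}. In both, the left top entry of $\begin{smallmatrix} a&&b\\ &c\end{smallmatrix}$ is $a_{i-1,j}$ and the right one is $a_{i-1,j+1}$, consistent with \eqref{eq:gz}. Row $i=0$ is excluded throughout, since it is always encircled by (E1) and is not a position of $\Free$.
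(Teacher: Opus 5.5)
Your proof is correct and follows the paper's own argument: you split on whether $a_{i-1,j}<a_{i-1,j+1}$, which (E5) settles, or the triangle is all-equal, which efficiency together with (E2) settles, and you phrase it as a contrapositive where the paper argues by contradiction. The only cosmetic difference is that the paper first uses (E2) to note that a non-encircled entry carries no edges at all, whereas you apply (E2) directly to the upper-right edge that efficiency guarantees.
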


\begin{proof}
By \eqref{eq:gz} we always have $a_{ij}\ge a_{i-1,j}$, so suppose $(i,j)\in U$ and
$a_{ij}=a_{i-1,j}$; we derive a contradiction. By (E2) the lower endpoint of an edge is encircled,
so the entry $a_{ij}$, not being encircled, carries no edges at all. Two cases arise.

If $a_{i-1,j}<a_{i-1,j+1}$, then the triangle with top entries $a_{i-1,j}<a_{i-1,j+1}$ and bottom
entry $a_{ij}=a_{i-1,j}$ is of the shape appearing in (E5), which forces $a_{ij}$ to be encircled,
contradicting $(i,j)\in U$.

If $a_{i-1,j}=a_{i-1,j+1}$, then $a_{i-1,j}=a_{ij}=a_{i-1,j+1}$ is an all-equal triangle whose
bottom entry carries no edge, in particular no edge to the upper-right entry. By
Definition~\ref{def:rank} the pattern $P$ is then inefficient, again a contradiction.
\end{proof}

\begin{prop}\label{prop:inj}
The map
\[
\Psi\colon\cP^+(\lambda)\longrightarrow\cN(\lambda),
\qquad
P\longmapsto\bigl(\text{underlying pattern of } P,\ \text{non-encircled entries of } P\bigr)
\]
is well defined and injective.
\end{prop}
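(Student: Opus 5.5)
Well-definedness and injectivity are handled separately, and both come almost directly from the results just established.

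\emph{Well-definedness.} Let $P\in\cP^+(\lambda)$ have underlying pattern $A$ and set $U$ of non-encircled entries. We must show $(A,U)\in\cN(\lambda)$. First, $A$ is an integer point of $\GZ(\lambda)$ with top row $\lambda$, so it is a Gelfand--Zetlin pattern in the sense of Definition~\ref{def:gz}. Second, by (E1) no top-row entry lies in $U$, so $U$ consists of positions $(i,j)$ with $i\ge1$, which is the range in Definition~\ref{def:free}. Third, Proposition~\ref{prop:eff} gives $U\subseteq\Free(A)$. Hence $\Psi(P)=(A,U)$ is a marked pattern.

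\emph{Injectivity.} Suppose $P,P'\in\cP^+(\lambda)$ satisfy $\Psi(P)=\Psi(P')$. Then $P$ and $P'$ have the same underlying entries $a_{ij}$. They also have the same set of non-encircled entries, and therefore the same set of encircled entries. An enhanced pattern consists of exactly three pieces of data: the entries, the set of circles, and the set of edges. So it remains only to show that $P$ and $P'$ have the same edges.

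This follows from Lemma~\ref{lem:redundant}. For an efficient pattern, the edges are produced by the procedure $\cR$, and $\cR$ reads only two things: the entries, and which entries are encircled. Its input is therefore identical for $P$ and $P'$, so it outputs the same edges, and $P=P'$.

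To be careful, I would check that the scan in $\cR$ is deterministic. It proceeds row by row. The tie-breaking clause asks whether there is a path of edges in rows $\le i-1$, and by induction on $i$ those edges have already been determined identically for $P$ and $P'$. So every decision in the scan agrees for the two patterns.

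There is no real obstacle here. The substance is contained in Proposition~\ref{prop:eff} and in the cited Lemma~\ref{lem:redundant}. The only point needing attention is that Lemma~\ref{lem:redundant} is stated for efficient patterns, which is exactly why $\Psi$ is defined on $\cP^+(\lambda)$ rather than on all of $\cP(\lambda)$.
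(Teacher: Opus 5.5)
Your proposal is correct and follows the paper's proof: well-definedness comes from Proposition~\ref{prop:eff}, and injectivity from Lemma~\ref{lem:redundant}, since $\cR$ reconstructs the edges of an efficient pattern from its entries and its circles. Your two extra checks, that (E1) keeps $U$ out of the top row and that the row-by-row scan of $\cR$ is deterministic, are harmless refinements of points the paper leaves implicit.
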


\begin{proof}
It is well defined by Proposition~\ref{prop:eff}. It is injective by Lemma~\ref{lem:redundant}: for
an efficient enhanced pattern the edges are reconstructed from the underlying pattern and the set of
encircled entries, so $P$ is recovered from $\Psi(P)$.
\end{proof}

That $\Psi$ is surjective is the substance of Section~\ref{sec:surjectivity}; granting it for the
moment, the composite $\Psi^{-1}\circ\Phi$ is the bijection announced in the introduction, and
Corollary~\ref{cor:monomial} says that it matches the summands of \eqref{eq:buch} and \eqref{eq:ps}
term by term.

We conclude this section by recording what Proposition~\ref{prop:eff} means on the tableau side. It
is not merely a technical restriction: it is the column-strictness of set-valued tableaux, seen
through the dictionary.

\begin{corollary}\label{cor:whyeff}
Let $(\bnu,U)\in\cN(\lambda)$ and $T=\Phi^{-1}(\bnu,U)$. A position $(i,j)$ of the pattern, with
$k=n-i$ and $r=k-j+1$, lies in $U$ if and only if the letter $k+1$ occurs as a non-minimal element
of the cell $(r,\nu^{(k)}_r)$ of $T$. The requirement $(i,j)\in\Free$, that is
$\nu^{(k)}_r>\nu^{(k+1)}_{r+1}$, holds if and only if inserting the letter $k+1$ into that cell does
not violate the column condition of \eqref{eq:svt}. Consequently the enhanced patterns excluded by
efficiency are exactly those whose prescribed excess letter would collide with the cell immediately
below its host.
\end{corollary}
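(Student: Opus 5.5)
The corollary has three parts, and I will prove them in turn, reading everything off the explicit inverse \eqref{eq:inverse} constructed in the proof of Theorem~\ref{thm:bij1}, together with Lemma~\ref{lem:host}.

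For the first assertion, write $E$ for the set of pairs $(k,r)$ corresponding to positions of $U$ under \eqref{eq:dict}. By \eqref{eq:inverse}, the letter $k+1$ is adjoined to the cell $(r,c)$ exactly when $(k,r)\in E$ and $c=\nu^{(k)}_r$. Step~2 of that proof shows that adjoined letters are non-minimal and that $\min T(r,c)=\mu(r,c)$. Conversely, Lemma~\ref{lem:host} says that a non-minimal occurrence of $k+1$ in row $r$ can only sit in the cell $(r,\nu^{(k)}_r)$. Hence $(i,j)\in U$ if and only if $k+1$ is a non-minimal element of $T(r,\nu^{(k)}_r)$.

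For the second assertion, fix a position $(i,j)$ with $c=\nu^{(k)}_r\ge1$. Consider the filling $T'$ obtained from $T$ by adjoining $k+1$ to $T(r,c)$, and ask whether $T'$ satisfies the column condition $\max T'(r,c)<\min T'(r+1,c)$.
\begin{itemize}
\item If the cell $(r+1,c)$ does not exist, then $\nu^{(k+1)}_{r+1}\le\lambda_{r+1}<c$, so the position is automatically free.
\item If the cell exists, its minimum is $\mu(r+1,c)$. We have $\mu(r+1,c)>k+1$ if and only if $\nu^{(k+1)}_{r+1}<c$, which by \eqref{eq:free-both} is exactly freeness.
\end{itemize}
The old entries of $T(r,c)$ already satisfy the column condition, so the condition for $T'$ reduces to $k+1<\mu(r+1,c)$. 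This gives the claimed equivalence. It is essentially the computation in the proof of Lemma~\ref{lem:nec} and in Step~1 of Theorem~\ref{thm:bij1}, read in both directions.

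Two side checks complete this part. The row condition is unaffected by adjoining $k+1$: $c$ is the rightmost cell of row $r$ with minimum at most $k$, so the next cell has minimum at least $k+1$. Also $k+1$ is genuinely non-minimal in $T'(r,c)$, since $\mu(r,c)\le k$.

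For the final sentence, combine Proposition~\ref{prop:eff} and its proof. An enhanced pattern with a non-encircled entry at a non-free position, that is, with $a_{ij}=a_{i-1,j}$, is either forbidden by (E5) or falls in an all-equal triangle whose bottom entry carries no edge. In the second case it is inefficient. Such entries are therefore exactly those whose excess letter $k+1$, placed in its host cell $(r,\nu^{(k)}_r)$, violates column-strictness against the cell $(r+1,\nu^{(k)}_r)$ by the previous paragraph.

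The only delicate point is the case split on whether the cell below the host exists. The word ``exactly'' should be read as identifying non-free marked positions with column collisions, which is the content proved above.
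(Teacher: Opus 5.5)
Your proposal is correct and follows essentially the paper's route. The first assertion comes from the construction of $\Phi$ (you use \eqref{eq:inverse}) together with Lemma~\ref{lem:host}; the second from the fact that the cell below the host exists with minimum at most $k+1$ precisely when $\nu^{(k+1)}_{r+1}\ge\nu^{(k)}_r$, which is the computation of Lemma~\ref{lem:nec} and Step~1 of Theorem~\ref{thm:bij1}; and the last from Proposition~\ref{prop:eff} read backwards. Your explicit case split on whether the cell $(r+1,c)$ exists is slightly more careful than the paper's terse proof, and so is your caveat on how to read ``exactly'', which is justified: an enhanced pattern can also be inefficient because an encircled all-equal bottom entry is joined only to the left, with no non-free mark involved.
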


\begin{proof}
The first assertion is Definition~\ref{def:phi} together with Lemma~\ref{lem:host}. For the second,
the cell immediately below the host cell $(r,\nu^{(k)}_r)$ exists and has minimum at most $k+1$
precisely when $\nu^{(k+1)}_{r+1}\ge\nu^{(k)}_r$, and in that case \eqref{eq:svt} fails for the pair
of cells $(r,\nu^{(k)}_r)$ and $(r+1,\nu^{(k)}_r)$ once $k+1$ is inserted above; this is the
computation carried out in the proofs of Lemma~\ref{lem:nec} and of Step~1 of
Theorem~\ref{thm:bij1}. The last assertion is Proposition~\ref{prop:eff} read backwards.
\end{proof}

\section{Surjectivity: the reconstruction recipe}
\label{sec:surjectivity}

By Proposition~\ref{prop:inj} the map $\Psi\colon\cP^+(\lambda)\to\cN(\lambda)$ is injective. This
section proves that it is surjective, and hence, with Theorem~\ref{thm:bij1}, that
$\widetilde\Phi=\Psi^{-1}\circ\Phi$ is a bijection from $\SVT_n(\lambda)$ onto $\cP^+(\lambda)$.

The task is concrete. Let $(A,U)\in\cN(\lambda)$ be a marked pattern, with entries $a_{ij}$ and with
$U\subseteq\Free(A)$ the set of positions to be left without a circle. Encircle every entry outside
$U$, in particular every entry of the top row, and run the procedure $\cR$ of
Lemma~\ref{lem:redundant} to produce a set of edges. We must show that the result is an enhanced
pattern in the sense of Definition~\ref{def:enhanced} and that it is efficient; its image under
$\Psi$ is then $(A,U)$ by construction.

It is worth emphasizing that $\cR$ is being used here in a way it was not designed for. In
Lemma~\ref{lem:redundant} it is applied to a pattern already known to be enhanced and efficient, and
merely recovers information that was there all along; here it is applied to an arbitrary marked
pattern, and the eight conditions must be verified from scratch. Seven of them turn out to be
automatic, for a single reason, which we isolate first.

\subsection{The anchor lemma and seven of the axioms}
\label{ss:anchor}

\begin{lemma}[Anchor lemma]\label{lem:anchor}
Let $(A,U)\in\cN(\lambda)$. If $a_{ij}=a_{i-1,j}$, then the entry $a_{ij}$ is encircled.
\end{lemma}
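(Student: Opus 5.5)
This follows directly from the definitions; no axiom of Definition~\ref{def:enhanced} is needed. In the setting of this section, the encircled entries of the marked pattern $(A,U)$ are by construction exactly the entries at positions outside $U$, together with every entry of the top row. So it suffices to show that a position $(i,j)$ with $a_{ij}=a_{i-1,j}$ cannot lie in $U$.

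Let $(i,j)$ be such a position with $i\ge1$; the top row is encircled anyway. By Definition~\ref{def:free}, $(i,j)$ is free exactly when $a_{ij}>a_{i-1,j}$. The hypothesis $a_{ij}=a_{i-1,j}$ therefore says $(i,j)\notin\Free(A)$. Since $(A,U)\in\cN(\lambda)$ requires $U\subseteq\Free(A)$, we get $(i,j)\notin U$. Hence $a_{ij}$ is encircled.

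The only point to check is that "encircled" in the lemma is meant in this section's sense: the enhancement produced from $(A,U)$ by encircling everything outside $U$ and then running $\cR$. The procedure $\cR$ adds edges but never changes which entries are circled, so the conclusion holds before and after $\cR$ is applied. I do not expect any real obstacle here. The lemma's substance lies in its later uses, namely feeding (E5), (E7), (E8) and efficiency. It is in effect the converse reading of Proposition~\ref{prop:eff}.
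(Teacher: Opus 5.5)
Your proposal is correct and is the paper's proof: by Definition~\ref{def:free} every position of $U$ satisfies $a_{ij}>a_{i-1,j}$, so a position with $a_{ij}=a_{i-1,j}$ is not in $U$ and is therefore encircled. Your closing description of the lemma as a ``converse'' of Proposition~\ref{prop:eff} is loose, since both express the same inclusion $U\subseteq\Free$ (holding here by definition of $\cN(\lambda)$, there as a consequence of the axioms), but this does not affect the argument.
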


\begin{proof}
By Definition~\ref{def:free} a position of $U$ is free, that is, satisfies $a_{ij}>a_{i-1,j}$.
Hence $(i,j)\notin U$.
\end{proof}

The lemma is a triviality, but it is exactly the right triviality: every one of the conditions
(E4)--(E8) has a hypothesis forcing the bottom entry of a triangle to equal its upper-left
neighbor, and each of them asserts, among other things, that this entry is encircled. By
Corollary~\ref{cor:whyeff} the content of Lemma~\ref{lem:anchor} on the tableau side is the
column-strictness of set-valued tableaux; it is this that makes the axioms of
Definition~\ref{def:enhanced} self-enforcing.

\begin{theorem}\label{thm:reduce}
For every marked pattern $(A,U)\in\cN(\lambda)$, the enhancement produced by $\cR$ satisfies
conditions (E1), (E2), (E4), (E5), (E6), (E7), (E8) of Definition~\ref{def:enhanced}, and the
resulting pattern is efficient.
\end{theorem}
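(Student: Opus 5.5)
The plan is to verify the conditions one at a time directly from the description of $\cR$ in Lemma~\ref{lem:redundant}, using Lemma~\ref{lem:anchor} wherever a bottom entry must be shown to be encircled. Conditions (E1) and (E2) come straight from the construction. For (E1), the top row lies outside $U$, since $U$ consists of positions with $i\ge1$, so every top entry is encircled. For (E2), $\cR$ draws an edge only from an encircled entry $a_{ij}$ to an upper neighbor that is equal to it. The top-row connections declared at the start of $\cR$ join equal entries, and both endpoints are encircled. Non-encircled entries receive no edges.

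Conditions (E4)--(E8) all concern a triangle whose bottom entry $a_{ij}$ equals its upper-left neighbor $a_{i-1,j}$, so Lemma~\ref{lem:anchor} makes that entry encircled in every case. It then remains to check which edges $\cR$ assigns.
\begin{itemize}
\item (E5): the bottom entry is encircled, and nothing more is required.
\item (E6): here $a_{ij}=b=a_{i-1,j+1}$ differs from $a_{i-1,j}=a$, so the entry equals exactly one upper neighbor. The second clause of $\cR$ then joins it to the upper-right entry.
\item (E8): the triangle is all-equal and its bottom entry is encircled, so the third clause of $\cR$ always produces an edge, either to both top entries or to the upper-right one.
\item (E7): the hypothesis is that the two top entries are joined by a path in rows $\le i-1$. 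This is exactly the condition under which the third clause of $\cR$ joins the bottom entry to both.
\item (E4): the two equal top-row entries are connected by declaration. Since $a_{1j}$ lies between them, it is equal to both, so Lemma~\ref{lem:anchor} makes it encircled. The third clause then applies with the connection already present, and joins $a_{1j}$ to both.
\end{itemize}

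Efficiency has the same shape as (E8). In any all-equal triangle the bottom entry is encircled by Lemma~\ref{lem:anchor}, and the third clause of $\cR$ always includes the upper-right edge $\Rgt$. So no triangle witnesses inefficiency in the sense of Definition~\ref{def:rank}.

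The only point requiring care is the meaning of ``path of edges'' in (E7), and in (E4) as it interacts with the top-row declaration. The path must be taken in rows $\le i-1$, matching the remark after Definition~\ref{def:enhanced}, and the top-row connections must count as edges in both places. I expect this bookkeeping to be the main (though mild) obstacle: one must make sure the path notion in $\cR$ and in (E7) coincide, so that the third clause of $\cR$ is literally the conclusion of (E7).

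None of this touches (E3), which involves how edges propagate between rows and is not a local triangle condition. That is why it is excluded from the statement.
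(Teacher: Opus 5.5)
Your proof is correct and follows the paper's argument essentially step for step. (E1) and (E2) come from the construction, Lemma~\ref{lem:anchor} supplies encircledness, and the branch structure of $\cR$ supplies the required edges in (E4), (E5), (E7), (E8) and efficiency.

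One small slip: your claim that Lemma~\ref{lem:anchor} makes the bottom entry encircled ``in every case'' of (E4)--(E8) fails for (E6). There the bottom entry equals its upper-\emph{right} neighbor $b>a$, so the lemma does not apply. The step still goes through, because encircledness is part of the hypothesis of (E6), and that is exactly what the second clause of $\cR$ needs.
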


\begin{proof}
(E1) holds because $U$ contains no position with $i=0$, so all entries of the top row are encircled.

(E2) holds by construction: $\cR$ joins an entry only to an upper neighbor equal to it, and only
when the entry itself is encircled.

(E5). The hypothesis is that $a_{i-1,j}<a_{i-1,j+1}$ and $a_{ij}=a_{i-1,j}$. The latter equality is
that of Lemma~\ref{lem:anchor}, so $a_{ij}$ is encircled, which is the assertion.

(E4). Suppose $a_{0,j}=a_{0,j+1}$. By \eqref{eq:gz} we have
$a_{0,j}\le a_{1,j}\le a_{0,j+1}$, so $a_{1,j}=a_{0,j}=a_{0,j+1}$ and the triangle is all-equal. By
Lemma~\ref{lem:anchor} the entry $a_{1,j}$ is encircled, and its two upper neighbors are adjacent
equal entries of the top row, hence connected by the convention with which $\cR$ begins. The
all-equal branch of $\cR$ therefore draws both edges, as required.

(E6). Here $a_{i-1,j}<a_{i-1,j+1}=a_{ij}$ and the entry $a_{ij}$ is encircled. It equals its
upper-right neighbor and not its upper-left one, so $\cR$ draws exactly the edge to the
upper-right entry.

(E7). An all-equal triangle has $a_{ij}=a_{i-1,j}$, so $a_{ij}$ is encircled by
Lemma~\ref{lem:anchor}. If its two upper entries are joined by a path of edges in rows $\le i-1$,
the all-equal branch of $\cR$ draws both edges.

(E8). Same hypothesis; in either branch of the all-equal case $\cR$ draws at least the edge to the
upper-right entry.

Efficiency. Let an all-equal triangle with bottom entry $a_{ij}$ occur. As just observed, $a_{ij}$
is encircled and $\cR$ draws the edge to the upper-right entry, whichever branch applies. So the
configuration forbidden by Definition~\ref{def:rank} never arises.
\end{proof}

\subsection{The descent lemma and condition (E3)}
\label{ss:descent}

Only condition (E3) remains. It concerns a \emph{diamond}: four positions
\[
(i-1,j+1)\ \text{(top)},\qquad (i,j)\ \text{(left)},\qquad (i,j+1)\ \text{(right)},\qquad
(i+1,j)\ \text{(bottom)},
\]
with $1\le i\le n-2$ and $1\le j\le n-i-1$. In the notation introduced after
Lemma~\ref{lem:redundant}, the left and right entries are joined to the top one precisely when
$\Rgt\in\cE(i,j)$ and $\Lft\in\cE(i,j+1)$, and both are joined to the bottom one precisely when
$\cE(i+1,j)=\{\Lft,\Rgt\}$. Thus (E3) reads
\begin{equation}\label{eq:cond3}
\bigl[\,\Rgt\in\cE(i,j)\ \text{ and }\ \Lft\in\cE(i,j+1)\,\bigr]
\qquad\Longleftrightarrow\qquad
\cE(i+1,j)=\{\Lft,\Rgt\}.
\end{equation}

One implication is easy.

\begin{prop}\label{prop:cond3}
The implication $\Rightarrow$ in \eqref{eq:cond3} holds for the enhancement produced by $\cR$.
\end{prop}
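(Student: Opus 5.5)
We prove the implication $\Rightarrow$ in \eqref{eq:cond3} directly from the construction of $\cR$, using the Anchor Lemma~\ref{lem:anchor} and the inequalities \eqref{eq:gz}.

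\emph{Step 1: the diamond is constant.} Assume $\Rgt\in\cE(i,j)$ and $\Lft\in\cE(i,j+1)$. By (E2), which holds by Theorem~\ref{thm:reduce}, $\cR$ draws an edge only between equal entries. Hence $a_{ij}=a_{i-1,j+1}=a_{i,j+1}$; call this common value $a$. The bottom entry satisfies $a_{ij}\le a_{i+1,j}\le a_{i,j+1}$ by \eqref{eq:gz}, so $a_{i+1,j}=a$ as well. The triangle with top entries $(i,j),(i,j+1)$ and bottom entry $(i+1,j)$ is therefore all-equal.

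\emph{Step 2: the bottom entry is encircled.} Since $a_{i+1,j}=a_{ij}$ and $(i,j)$ is the upper-left neighbor of $(i+1,j)$, Lemma~\ref{lem:anchor} shows that $a_{i+1,j}$ is encircled. So when $\cR$ reaches row $i+1$, it processes position $(i+1,j)$ through its all-equal branch.

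\emph{Step 3: the two top entries are connected.} That branch draws both edges exactly when $(i,j)$ and $(i,j+1)$ are already joined by a path of edges in rows $\le i$. Such a path exists: it consists of the edge from $(i,j)$ to $(i-1,j+1)$ followed by the edge from $(i-1,j+1)$ to $(i,j+1)$. Both edges are present by hypothesis, and both lie in rows $i-1$ and $i$. Hence $\cR$ draws both edges from $(i+1,j)$, that is, $\cE(i+1,j)=\{\Lft,\Rgt\}$.

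\emph{Main point to check.} The only delicate issue is timing. The edges of rows $\le i$ are all fixed before $\cR$ processes row $i+1$, because $\cR$ scans from top to bottom and only draws edges upward from the row being processed. Consequently the connectivity test at $(i+1,j)$ does see the path from Step~3. No obstacle beyond this is expected; the hard direction is the converse $\Leftarrow$.
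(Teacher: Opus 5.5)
Your proof is correct and is essentially the paper's own argument: the edges join equal entries and \eqref{eq:gz} force the whole diamond to be constant, Lemma~\ref{lem:anchor} encircles the bottom entry, and the path $(i,j)-(i-1,j+1)-(i,j+1)$ in rows $\le i$ triggers the all-equal branch of $\cR$. Your remark on timing, that rows $\le i$ are settled before row $i+1$ is scanned, is a point the paper leaves implicit.
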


\begin{proof}
The hypothesis gives $a_{ij}=a_{i-1,j+1}=a_{i,j+1}$, and \eqref{eq:gz} gives
$a_{ij}\le a_{i+1,j}\le a_{i,j+1}$, so all four entries of the diamond are equal. In particular
$a_{i+1,j}=a_{ij}$ is an all-equal triangle with bottom entry $(i+1,j)$, which is therefore
encircled by Lemma~\ref{lem:anchor}. Its two upper entries $(i,j)$ and $(i,j+1)$ are joined by the
path $(i,j)-(i-1,j+1)-(i,j+1)$, which lies in rows $\le i$; hence the all-equal branch of $\cR$
draws both edges from $(i+1,j)$.
\end{proof}

The converse is the only genuinely delicate point of the paper, and the reason is a mismatch of
scope. The all-equal branch of $\cR$ consults connectivity in rows $\le i-1$, whereas the hypothesis
$\cE(i+1,j)=\{\Lft,\Rgt\}$ supplies, as in the proof just given, a path in rows $\le i$. One must
show that the extra row buys nothing. This is the content of Lemma~\ref{lem:descent}, which rests in
turn on the following reading of the all-equal branch: a doubly-edged entry is a certificate of
connectivity one row higher.

\begin{lemma}\label{lem:both}
Suppose $\cE(i,k)=\{\Lft,\Rgt\}$. Then $a_{i-1,k}=a_{ik}=a_{i-1,k+1}$, and the positions $(i-1,k)$
and $(i-1,k+1)$ are joined by a path of edges lying in rows $\le i-1$.
\end{lemma}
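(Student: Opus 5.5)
This is essentially a matter of unwinding the definition of $\cR$, so I would argue directly from the three branches of the procedure in Lemma~\ref{lem:redundant}, applied to the encircled entries of the marked pattern $(A,U)$.

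First, by (E2), already verified in Theorem~\ref{thm:reduce}, and more directly by construction, $\cR$ draws an edge from $(i,k)$ only to an upper neighbor whose entry equals $a_{ik}$. So $\Lft\in\cE(i,k)$ gives $a_{i-1,k}=a_{ik}$, and $\Rgt\in\cE(i,k)$ gives $a_{ik}=a_{i-1,k+1}$. This yields the chain of equalities, and shows that $(i,k)$ lies in the all-equal case of $\cR$.

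Next I would look at which branch produced the two edges. In the first two branches, where $a_{ik}$ equals at most one upper neighbor, $\cR$ draws at most one edge. In the all-equal branch it draws only $\Rgt$ unless $(i-1,k)$ and $(i-1,k+1)$ are already joined by a path of edges in rows $\le i-1$. Since $\cE(i,k)=\{\Lft,\Rgt\}$, the two-edge alternative must have occurred, and its triggering condition is exactly the required connectivity. One case needs separate treatment, namely $i=1$. There the path in row $0$ is supplied by the starting convention of $\cR$, which declares equal neighboring top-row entries connected. I would read that convention as drawing a horizontal edge, or as a path of length one in row $0$, consistent with its use in the proof of (E4).

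The only delicate point is bookkeeping about time. The connectivity test for row $i$ is evaluated using only edges that exist when row $i$ is processed, and these all lie in rows $\le i-1$. Edges are never removed, since $\cR$ scans top to bottom and only adds edges. So a path that existed at that moment persists in the final enhancement, still within rows $\le i-1$.
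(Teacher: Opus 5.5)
Your proposal is correct and follows essentially the same route as the paper's proof. Both arguments derive the equalities from the fact that edges join equal entries, and both observe that $\cR$ draws two upward edges only in the all-equal branch, whose triggering condition is exactly the required connectivity. Both also settle $i=1$ by the top-row convention. Your remark that $\cR$ only adds edges, so the path found when row $i$ is processed persists within rows $\le i-1$, is sound bookkeeping that the paper leaves implicit.
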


\begin{proof}
Edges join equal entries, whence the displayed equalities. Two upward edges are drawn by $\cR$ only
in the all-equal branch: if $a_{ik}$ equalled exactly one of its upper neighbors only one edge
would be drawn, and if it equalled neither, none. The defining condition of that branch is precisely
the asserted connectivity. For $i=1$ the condition holds automatically, since $a_{0,k}=a_{0,k+1}$
are adjacent equal entries of the top row.
\end{proof}

\begin{lemma}[Descent lemma]\label{lem:descent}
Let $i\ge1$ and let $u,w$ be positions in rows $\le i-1$. If $u$ and $w$ are joined by a path of
edges lying in rows $\le i$, then they are joined by a path of edges lying in rows $\le i-1$.
\end{lemma}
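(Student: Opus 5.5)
Take a path $u=p_0,p_1,\dots,p_m=w$ of edges in rows $\le i$, and remove its excursions into row $i$ one at a time. Every edge joins consecutive rows, so the path meets row $i$ only along excursions of the form $x - y - x'$: here $y$ is in row $i$, and $x,x'$ are both upper neighbors of $y$, hence in row $i-1$. This uses that $u,w$ lie in rows $\le i-1$, so row $i$ can never be an endpoint. We may assume the path is simple, so $x\neq x'$. It therefore suffices to show that each such excursion can be replaced by a path between $x$ and $x'$ lying in rows $\le i-1$. Doing this for every excursion gives a walk in rows $\le i-1$ from $u$ to $w$. We then argue by induction on the number of visits to row $i$, or simply replace all excursions simultaneously.

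Take one excursion $x-y-x'$ with $y=(i,k)$. Every edge at $y$ toward row $i-1$ is an upward edge from $y$, since edges only join consecutive rows and the upper endpoint of an edge from row $i$ lies in row $i-1$. So $y$ has upward edges to both $x$ and $x'$, and $\{x,x'\}=\{(i-1,k),(i-1,k+1)\}$, which means $\cE(i,k)=\{\Lft,\Rgt\}$. By Lemma~\ref{lem:both}, $(i-1,k)$ and $(i-1,k+1)$ are then joined by a path of edges in rows $\le i-1$. We substitute that path for the excursion.

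The main obstacle is confirming that edges really only go between consecutive rows and that any edge touching $y$ from above is one of $y$'s own upward edges. There are two ways to see this. First, $\cR$ draws edges only from an entry to its upper neighbors, and the top-row convention "connected" is a declaration of connectivity, not an edge. Second, any edge from $y$ downward goes to row $i+1$, which the path cannot enter. This second point depends on how the top-row convention of $\cR$ is interpreted. If "connected" neighbors in row $0$ are treated as joined by a formal horizontal edge, then excursions along row $i$ could also be horizontal when $i=0$. That case does not arise, because $i\ge1$ and horizontal connections exist only in row $0\le i-1$, where they are already allowed. With this checked, the lemma follows.
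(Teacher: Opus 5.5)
Your proposal is correct and follows essentially the same argument as the paper: each row-$i$ vertex of the path is an interior vertex whose two path-neighbours must be its two upper neighbours, so $\cE(i,k)=\{\Lft,\Rgt\}$, and Lemma~\ref{lem:both} supplies a replacement path in rows $\le i-1$ for each such detour. Your extra remark about the top-row convention is harmless: any horizontal connections can only live in row $0\le i-1$, so they never create a row-$i$ adjacency.
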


\begin{proof}
Let $Q$ be such a path. Edges join positions in consecutive rows only, so no two positions of row
$i$ are adjacent; since the endpoints $u$ and $w$ lie in rows $\le i-1$, every position of $Q$ in
row $i$ is an interior vertex of $Q$. Such a position $(i,k)$ therefore meets two distinct edges of
$Q$, and its only available neighbors within rows $\le i$ are $(i-1,k)$ and $(i-1,k+1)$. Hence $Q$
traverses the detour $(i-1,k)-(i,k)-(i-1,k+1)$ and $\cE(i,k)=\{\Lft,\Rgt\}$. By
Lemma~\ref{lem:both} there is a path from $(i-1,k)$ to $(i-1,k+1)$ lying in rows $\le i-1$;
substitute it for the detour. Performing this replacement at every position of $Q$ in row $i$
yields a walk from $u$ to $w$ in rows $\le i-1$, and hence a path.
\end{proof}

\begin{theorem}\label{thm:cond3}
The implication $\Leftarrow$ in \eqref{eq:cond3} holds for the enhancement produced by $\cR$.
Consequently $\cR$ satisfies (E3), and, with Theorem~\ref{thm:reduce}, produces an efficient
enhanced pattern for every marked pattern $(A,U)\in\cN(\lambda)$.
\end{theorem}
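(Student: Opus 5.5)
We assume $\cE(i+1,j)=\{\Lft,\Rgt\}$ and aim to derive $\Rgt\in\cE(i,j)$ and $\Lft\in\cE(i,j+1)$.

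First, Lemma~\ref{lem:both} applied at $(i+1,j)$ gives $a_{ij}=a_{i+1,j}=a_{i,j+1}$, and a path $Q$ of edges from $(i,j)$ to $(i,j+1)$ lying in rows $\le i$. Since \eqref{eq:gz} gives $a_{ij}\le a_{i-1,j+1}\le a_{i,j+1}$, the top entry $a_{i-1,j+1}$ is equal to both, so all four entries of the diamond coincide.

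Next we look at how $Q$ leaves its endpoints. The endpoints $(i,j)$ and $(i,j+1)$ are distinct, so $Q$ has at least one edge. Edges only join consecutive rows, and $Q$ stays in rows $\le i$, so the first edge of $Q$ goes upward from $(i,j)$: either to $(i-1,j)$ or to $(i-1,j+1)$. Likewise the last edge goes upward from $(i,j+1)$, to $(i-1,j+1)$ or $(i-1,j+2)$.

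Now we split by what $\cR$ draws at each endpoint. Consider $(i,j)$; it is encircled, since it carries an edge. Its upper-right neighbor $a_{i-1,j+1}$ equals it. So $\cR$ falls in one of two branches. If $a_{i-1,j}\ne a_{ij}$, the ``exactly one'' branch applies and $\cR$ draws only $\Rgt$. If $a_{i-1,j}=a_{ij}$, the all-equal branch applies, and in either sub-branch $\cR$ draws $\Rgt$. Hence $\Rgt\in\cE(i,j)$ automatically.

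The real content is therefore $\Lft\in\cE(i,j+1)$, and this is where I expect the main obstacle. The entry $(i,j+1)$ is encircled and equals its upper-left neighbor $a_{i-1,j+1}$.
\begin{itemize}
\item If $a_{i-1,j+2}\ne a_{i,j+1}$, the ``exactly one'' branch draws $\Lft$, and we are done.
\item Otherwise the triangle at $(i,j+1)$ is all-equal, and $\Lft$ is drawn exactly when $(i-1,j+1)$ and $(i-1,j+2)$ are joined by a path in rows $\le i-1$.
\end{itemize}

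To produce that path in the second case, we build a walk inside rows $\le i$ and then descend:
\begin{enumerate}
\item Start at $(i-1,j+1)$ and take the edge $\Rgt$ from $(i,j)$ just established, arriving at $(i,j)$.
\item Follow $Q$ from $(i,j)$ to $(i,j+1)$.
\item Finish with the last edge of $Q$, which leaves $(i,j+1)$ upward. In this case $\cE(i,j+1)$ is nonempty and contains $\Rgt$, since the all-equal branch always draws $\Rgt$; so this edge can be taken to reach $(i-1,j+2)$.
\end{enumerate}
This gives a walk from $(i-1,j+1)$ to $(i-1,j+2)$ inside rows $\le i$. Both endpoints lie in row $i-1$, so the Descent Lemma~\ref{lem:descent} turns it into a path in rows $\le i-1$. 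Hence $\cR$ takes the doubly-edged branch at $(i,j+1)$, and $\Lft\in\cE(i,j+1)$.

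This establishes the implication $\Leftarrow$. Combined with Proposition~\ref{prop:cond3} it gives (E3), and together with Theorem~\ref{thm:reduce} it yields the final assertion of the theorem.
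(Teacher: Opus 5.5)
Your proof is correct and follows the paper's route. Lemma~\ref{lem:both} at $(i+1,j)$ gives the all-equal diamond and the path $Q$; inspecting the branches of $\cR$ gives $\Rgt\in\cE(i,j)$; and the Descent Lemma~\ref{lem:descent} supplies the connectivity needed in the all-equal branch at $(i,j+1)$. The one difference is a streamlining in the last step: you pad $Q$ at both ends with the $\Rgt$-edges at $(i,j)$ and $(i,j+1)$, both guaranteed in that case, and then apply the Descent Lemma once to the resulting walk (after extracting a path from it, which is harmless), whereas the paper splits into cases on $q_1$ and $q_{m-1}$ and invokes Lemma~\ref{lem:both} separately.
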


\begin{proof}
Assume $\cE(i+1,j)=\{\Lft,\Rgt\}$ and set $v=a_{i+1,j}$.

\emph{Step 1: all four entries of the diamond equal $v$.} By Lemma~\ref{lem:both} we have
$a_{ij}=v=a_{i,j+1}$, and the positions $(i,j)$ and $(i,j+1)$ are joined by a path $Q$ of edges
lying in rows $\le i$. The inequalities \eqref{eq:gz} give $a_{ij}\le a_{i-1,j+1}\le a_{i,j+1}$,
whence $a_{i-1,j+1}=v$ as well. Thus $(i,j)$ equals its upper-\emph{right} neighbor and $(i,j+1)$
equals its upper-\emph{left} neighbor.

\emph{Step 2: $\Rgt\in\cE(i,j)$.} We first check that $(i,j)$ is encircled. An entry that is not
encircled receives no upward edges, and edges descending from row $i+1$ are unavailable inside rows
$\le i$; a non-encircled $(i,j)$ would therefore be an isolated vertex of the graph induced on rows
$\le i$, and could not be joined to $(i,j+1)$ by $Q$. Now $a_{ij}=v=a_{i-1,j+1}$. If
$a_{i-1,j}<v$, then $a_{ij}$ equals exactly one of its upper neighbors and $\cR$ draws the edge to
the upper-right one. If $a_{i-1,j}=v$, the triangle is all-equal and $\cR$ draws the edge to the
upper-right entry in either branch. In both cases $\Rgt\in\cE(i,j)$.

\emph{Step 3: $\Lft\in\cE(i,j+1)$.} The entry $a_{i,j+1}=v=a_{i-1,j+1}$ equals its upper-left
neighbor, so it is encircled by Lemma~\ref{lem:anchor}. If $a_{i-1,j+2}>v$, then it equals exactly
one of its upper neighbors and $\cR$ draws the edge to the upper-left one, as required. So assume
$a_{i-1,j+2}=v$. Then the triangle above $(i,j+1)$ is all-equal, and it suffices to prove that
$(i-1,j+1)$ and $(i-1,j+2)$ are joined by a path of edges in rows $\le i-1$, for then the all-equal
branch of $\cR$ draws both edges at $(i,j+1)$.

Write $Q=\bigl((i,j)=q_0,q_1,\dots,q_m=(i,j+1)\bigr)$. As no two positions of row $i$ are adjacent,
$m\ge2$, and
\[
q_1\in\bigl\{(i-1,j),\,(i-1,j+1)\bigr\},
\qquad
q_{m-1}\in\bigl\{(i-1,j+1),\,(i-1,j+2)\bigr\}.
\]
If $q_{m-1}=(i-1,j+1)$, then the last edge of $Q$ is the upward-left edge at $(i,j+1)$ and we are
done. So suppose $q_{m-1}=(i-1,j+2)$.

The subpath $q_1,\dots,q_{m-1}$ lies in rows $\le i$ and has both endpoints in row $i-1$, so by
Lemma~\ref{lem:descent} the positions $q_1$ and $q_{m-1}=(i-1,j+2)$ are joined by a path in rows
$\le i-1$. It therefore suffices to join $(i-1,j+1)$ to $q_1$ within rows $\le i-1$. If
$q_1=(i-1,j+1)$ this is vacuous. If $q_1=(i-1,j)$, then the first edge of $Q$ is the upward-left
edge at $(i,j)$, which together with Step 2 gives $\cE(i,j)=\{\Lft,\Rgt\}$; Lemma~\ref{lem:both}
then joins $(i-1,j)=q_1$ to $(i-1,j+1)$ within rows $\le i-1$, as needed.
\end{proof}

\begin{remark}\label{rem:tiebreak}
The proof explains why the tie-breaking rule of $\cR$ has to be exactly what it is, and neither of
the two constant alternatives will do. Lemma~\ref{lem:descent} is available only because a position
of row $i$ used as an interior vertex of a path must itself carry two edges, and therefore, by
Lemma~\ref{lem:both}, already certifies connectivity one row higher; this is what the connectivity
test in the all-equal branch provides. If that branch is replaced by ``always draw the edge to the
upper-right entry alone'', or by ``always draw both edges'', then \eqref{eq:cond3} fails.
\end{remark}

\subsection{Consequences}
\label{ss:consequences}

\begin{theorem}\label{thm:surj}
The map $\Psi\colon\cP^+(\lambda)\to\cN(\lambda)$ is a bijection. Consequently
\[
\widetilde\Phi:=\Psi^{-1}\circ\Phi\ \colon\ \SVT_n(\lambda)\ \longrightarrow\ \cP^+(\lambda)
\]
is a bijection satisfying
\[
x^{\widetilde\Phi(T)}=\beta^{\ex(T)}x^{\wt(T)}
\qquad\text{and}\qquad
\dim C_{\widetilde\Phi(T)}=\rk\widetilde\Phi(T)=\ex(T).
\]
\end{theorem}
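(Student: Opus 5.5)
The plan is to assemble the theorem from results already in place, since the substantive work has been done. We need three things: that $\Psi$ is a bijection, that $\widetilde\Phi$ is a bijection, and that the two identities hold.

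\emph{Bijectivity of $\Psi$.} Injectivity is Proposition~\ref{prop:inj}. For surjectivity, take a marked pattern $(A,U)\in\cN(\lambda)$. Encircle every entry outside $U$ and apply $\cR$; call the result $P$.

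\begin{itemize}
\item Theorem~\ref{thm:reduce} gives conditions (E1), (E2), (E4)--(E8) for $P$, together with efficiency.
\item Proposition~\ref{prop:cond3} and Theorem~\ref{thm:cond3} give (E3).
\end{itemize}

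Hence $P\in\cP^+(\lambda)$. By construction its underlying pattern is $A$ and its set of non-encircled entries is $U$, so $\Psi(P)=(A,U)$. With Theorem~\ref{thm:bij1}, this makes $\widetilde\Phi=\Psi^{-1}\circ\Phi$ a composite of bijections, hence a bijection.

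\emph{The identities.} Fix $T$ and set $P=\widetilde\Phi(T)$. Then $\Psi(P)=\Phi(T)=(\bnu(T),U(T))$. So $P$ is an enhanced pattern whose underlying pattern corresponds to $\bnu(T)$ and whose non-encircled set is $U(T)$, with $(\bnu(T),U(T))\in\cN(\lambda)$. Moreover $\Phi^{-1}$ of this pair is $T$ itself.

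Corollary~\ref{cor:monomial} therefore applies verbatim and yields
\[
\rk P=\ex(T)\qquad\text{and}\qquad x^P=\beta^{\ex(T)}x^{\wt(T)}.
\]
One check is needed here: $P$ is efficient, so $x^P$ is given by formula \eqref{eq:xP} and is not set to $0$. Finally, $\dim C_P=\rk P$ by Theorem~\ref{thm:ps}.

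\emph{Main obstacle.} There is no real obstacle left, since the delicate step, (E3), is Theorem~\ref{thm:cond3}. The one point to state explicitly is the identification $\Psi(P)=\Phi(T)$. It is needed so that the hypotheses of Corollary~\ref{cor:monomial} refer to the same $T$.
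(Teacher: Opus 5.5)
Your proposal is correct and follows essentially the same route as the paper: surjectivity of $\Psi$ comes from applying $\cR$ to an arbitrary marked pattern and invoking Theorems~\ref{thm:reduce} and~\ref{thm:cond3}, injectivity from Proposition~\ref{prop:inj}, and the identities from Corollary~\ref{cor:monomial} and Theorem~\ref{thm:ps}. Two details you make explicit are left implicit in the paper: that $\Psi(\widetilde\Phi(T))=\Phi(T)$, and that efficiency is what makes $x^P$ the nonzero monomial of \eqref{eq:xP}.
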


\begin{proof}
Let $(A,U)\in\cN(\lambda)$ and let $P$ be the pattern $A$ with the entries outside $U$ encircled and
the edges drawn by $\cR$. By Theorems~\ref{thm:reduce} and~\ref{thm:cond3} we have
$P\in\cP^+(\lambda)$, and $\Psi(P)=(A,U)$ by construction; so $\Psi$ is surjective, and it is
injective by Proposition~\ref{prop:inj}. The map $\Phi$ is a bijection onto $\cN(\lambda)$ by
Theorem~\ref{thm:bij1}, so $\widetilde\Phi$ is a bijection. The identity of monomials is
Corollary~\ref{cor:monomial}, and the statement about dimensions combines it with
Theorem~\ref{thm:ps}.
\end{proof}

Nothing in the argument used either of the two enumerative formulas \eqref{eq:buch} and
\eqref{eq:ps}: the proof is a bijection between the two indexing sets, constructed and inverted
explicitly. The formulas may therefore be deduced from one another.

\begin{corollary}\label{cor:equiv}
Buch's Theorem~\ref{thm:buch} and the formula \eqref{eq:ps} of~\cite{PresnovaSmirnov24} are
equivalent: the bijection $\widetilde\Phi$ matches their summands term by term, so either may be
deduced from the other.
\end{corollary}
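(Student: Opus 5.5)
The plan is to compare the two sums term by term using the bijection of Theorem~\ref{thm:surj}, taking care that nothing beyond the bijection enters. Since $\widetilde\Phi\colon\SVT_n(\lambda)\to\cP^+(\lambda)$ is a bijection, a reindexing gives
\[
\sum_{P\in\cP^+(\lambda)}x^P=\sum_{T\in\SVT_n(\lambda)}x^{\widetilde\Phi(T)}.
\]
By the identity $x^{\widetilde\Phi(T)}=\beta^{\ex(T)}x^{\wt(T)}$ of Theorem~\ref{thm:surj}, the right-hand side equals $\sum_{T}\beta^{\ex(T)}x^{\wt(T)}$. So the right-hand sides of \eqref{eq:buch} and \eqref{eq:ps} coincide as polynomials in $\ZZ[\beta][x_1,\dots,x_n]$, identically and independently of what $G^{(\beta)}_\lambda$ is.

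From this identity, if \eqref{eq:buch} holds then its right-hand side equals $G^{(\beta)}_\lambda$, and hence so does the right-hand side of \eqref{eq:ps}. Conversely, if \eqref{eq:ps} holds, the same equation read the other way gives \eqref{eq:buch}.

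The only point requiring care is non-circularity: neither enumerative formula may have been used in proving Theorem~\ref{thm:surj}. I will trace the dependencies to confirm this.
\begin{itemize}
\item Theorem~\ref{thm:bij1} and Theorem~\ref{thm:stats} are pure tableau combinatorics.
\item Corollary~\ref{cor:monomial} uses only the definition \eqref{eq:xP} of $x^P$, which for efficient patterns is given by an explicit formula.
\item Propositions~\ref{prop:eff} and~\ref{prop:inj}, and Theorems~\ref{thm:reduce} and~\ref{thm:cond3}, use only the axioms (E1)--(E8) and the recipe $\cR$ of Lemma~\ref{lem:redundant}.
\end{itemize}
Lemma~\ref{lem:redundant} is cited from~\cite{PresnovaSmirnov24}, so I would check that its proof there is a direct argument from the axioms and does not pass through the polynomial identity. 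I expect it does.

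The only part of Theorem~\ref{thm:ps} that enters is the dimension statement $\dim C_P=\rk P$, and it is used solely for the dimension clause, which plays no role in the equality of sums. With this confirmed, the corollary follows, and it carries no real obstacle beyond this bookkeeping.
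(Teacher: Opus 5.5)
Your proposal is correct and is essentially the paper's proof: reindex $\sum_{P\in\cP^+(\lambda)}x^P$ along the bijection $\widetilde\Phi$ and apply $x^{\widetilde\Phi(T)}=\beta^{\ex(T)}x^{\wt(T)}$ from Theorem~\ref{thm:surj}, so the two right-hand sides agree as formal expressions whatever $G^{(\beta)}_\lambda$ is. Your non-circularity audit corresponds to the paper's one-line remark before the corollary that neither enumerative formula is used anywhere in the argument. That includes your caveat that Lemma~\ref{lem:redundant} must be proved combinatorially, since it gives the injectivity of $\Psi$, and your observation that Theorem~\ref{thm:ps} enters only through the dimension clause.
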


\begin{proof}
By Theorem~\ref{thm:surj} the map $\widetilde\Phi$ is a bijection from $\SVT_n(\lambda)$ onto
$\cP^+(\lambda)$ carrying $\beta^{\ex(T)}x^{\wt(T)}$ to $x^{\widetilde\Phi(T)}$. Hence
\[
\sum_{T\in\SVT_n(\lambda)}\beta^{\ex(T)}x^{\wt(T)}
=\sum_{P\in\cP^+(\lambda)}x^P
\]
as formal expressions, independently of the common value of the two sides.
\end{proof}

\begin{corollary}\label{cor:count}
For every partition $\lambda$ with at most $n$ parts,
\[
\bigl|\SVT_n(\lambda)\bigr|=\bigl|\cP^+(\lambda)\bigr|
=\sum_{A}2^{|\Free(A)|},
\]
the sum being over all Gelfand--Zetlin patterns $A$ with top row $\lambda$; and the number of
$d$-dimensional efficient cells of the decomposition $\cC$ is $\sum_{A}\binom{|\Free(A)|}{d}$.
\end{corollary}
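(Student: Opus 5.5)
The plan is to read both counts off the bijections already established, with no new combinatorics. By Theorem~\ref{thm:bij1} the map $\Phi$ is a bijection $\SVT_n(\lambda)\to\cN(\lambda)$, and by Theorem~\ref{thm:surj} the map $\Psi$ is a bijection $\cP^+(\lambda)\to\cN(\lambda)$. Hence $|\SVT_n(\lambda)|=|\cN(\lambda)|=|\cP^+(\lambda)|$, and it remains to count $\cN(\lambda)$ directly from Definition~\ref{def:free}.

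That count comes from the fibers of the forgetful projection $\cN(\lambda)\to\{\text{Gelfand--Zetlin patterns with top row }\lambda\}$, $(A,U)\mapsto A$. By definition, the fiber over $A$ is the power set of $\Free(A)$, with no further constraint. It therefore has $2^{|\Free(A)|}$ elements, and summing over $A$ gives the first formula.

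For the refinement by dimension, I would use that $\Psi$ sends an efficient pattern $P$ to $(A,U)$ with $U$ its set of non-encircled entries. By Definition~\ref{def:rank}, $|U|=\rk P$, and by Theorem~\ref{thm:ps}, $\rk P=\dim C_P$. So the efficient cells of dimension $d$ correspond bijectively to the marked patterns with $|U|=d$. The fiber over $A$ then contributes the $\binom{|\Free(A)|}{d}$ subsets of size $d$. Through $\Phi$ and Theorem~\ref{thm:stats}, which gives $\ex(T)=|U|$, the same number counts the tableaux with $\ex(T)=d$, matching Corollary~\ref{cor:C}.

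There is no real obstacle here. The only point to be careful about is that the dimension statement relies on $\dim C_P=\rk P$ from Theorem~\ref{thm:ps}, which is quoted from~\cite{PresnovaSmirnov24}. The rank statement itself is purely combinatorial.
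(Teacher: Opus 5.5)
Your proposal is correct and follows the paper's own proof: both counts are read off the bijections $\Phi$ and $\Psi$ onto $\cN(\lambda)$. The fibre over a pattern $A$ is the power set of $\Free(A)$, and the rank-$d$ marked patterns are its $d$-element subsets, with $\dim C_P=\rk P$ supplied by Theorem~\ref{thm:ps}.
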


\begin{proof}
Immediate from Theorem~\ref{thm:surj} and Definition~\ref{def:free}, since the marked patterns over
a fixed $A$ are in bijection with the subsets of $\Free(A)$, and those of rank $d$ with the subsets
of size $d$.
\end{proof}

\section{The example \texorpdfstring{$\lambda=(2,1,0)$}{lambda = (2,1,0)}}
\label{sec:example}

We work out the bijection completely for $n=3$ and $\lambda=(2,1,0)$. Everything in this section is
a specialization of Section~\ref{sec:bijection} and Section~\ref{sec:surjectivity}; the example is
small enough to be listed in full, and the cells that appear in it will be used again in
Section~\ref{sec:crystal}.

Here $d=3$ and $\GZ(2,1,0)\subset\RR^3$ has coordinates $(y_{11},y_{12},y_{21})$ arranged as
\[
\begin{matrix}
0 && 1 && 2\\
 & y_{11} && y_{12} & \\
 && y_{21} &&
\end{matrix}
\qquad\qquad
0\le y_{11}\le 1,\quad 1\le y_{12}\le 2,\quad y_{11}\le y_{21}\le y_{12}.
\]
Under the dictionary \eqref{eq:dict} we have $y_{11}=\nu^{(2)}_2$, $y_{12}=\nu^{(2)}_1$ and
$y_{21}=\nu^{(1)}_1$, with $\nu^{(3)}=\lambda$.

\subsection{Patterns and free positions}
\label{ss:ex-patterns}

By Definition~\ref{def:free} the three positions are free subject to
\[
y_{11}>0,\qquad y_{12}>1,\qquad y_{21}>y_{11},
\]
the upper-left neighbors of the three entries being $0$, $1$ and $y_{11}$ respectively. The eight
Gelfand--Zetlin patterns and their free positions are as follows.

\begin{center}
\begin{tabular}{cccc}
\toprule
$(a_{11},a_{12},a_{21})$ & free positions & $|\Free|$ & \# marked patterns\\
\midrule
$(0,1,0)$ & --- & $0$ & $1$\\
$(0,1,1)$ & $y_{21}$ & $1$ & $2$\\
$(0,2,0)$ & $y_{12}$ & $1$ & $2$\\
$(0,2,1)$ & $y_{12},y_{21}$ & $2$ & $4$\\
$(0,2,2)$ & $y_{12},y_{21}$ & $2$ & $4$\\
$(1,1,1)$ & $y_{11}$ & $1$ & $2$\\
$(1,2,1)$ & $y_{11},y_{12}$ & $2$ & $4$\\
$(1,2,2)$ & $y_{11},y_{12},y_{21}$ & $3$ & $8$\\
\midrule
\multicolumn{3}{r}{total} & $27$\\
\bottomrule
\end{tabular}
\end{center}

By Corollary~\ref{cor:count} the numbers of efficient cells of $\cC$ in each dimension are
\[
(f_0,f_1,f_2,f_3)=(8,\,12,\,6,\,1),
\qquad
f_0-f_1+f_2-f_3=8-12+6-1=1,
\]
as it must be for a subdivision of a $3$-ball. On the other side of the bijection one counts
$|\SVT_3(2,1)|$ by the value of $\max T(1,1)$: that maximum is $1$ in $1\cdot 7\cdot 3=21$ tableaux
and $2$ in $2\cdot3\cdot1=6$, and it cannot be $3$; total $27$, in agreement.

\begin{remark}\label{rem:notface}
The decomposition $\cC$ is a genuine subdivision and not the face decomposition of the polytope.
Indeed $\GZ(2,1,0)$ has eight lattice points, which by Theorem~\ref{thm:ps} are the eight
zero-dimensional cells, but only seven vertices: at $(y_{11},y_{12},y_{21})=(0,2,1)$ exactly two of
the six defining inequalities are active, namely $y_{11}\ge0$ and $y_{12}\le2$, so this lattice
point lies in the relative interior of a two-dimensional face.
\end{remark}

\subsection{The complete correspondence}
\label{ss:ex-table}

We write a set-valued tableau of shape $(2,1)$ as $A\,B/C$ with $A=T(1,1)$, $B=T(1,2)$,
$C=T(2,1)$, listing the elements of each set without braces; thus $12\,23/3$ denotes
$A=\{1,2\}$, $B=\{2,3\}$, $C=\{3\}$. The following table lists all $27$ marked patterns together
with the corresponding tableaux $\Phi^{-1}$.

\begin{center}
\small
\begin{tabular}{cccl@{\qquad\qquad}cccl}
\toprule
pattern & non-encircled & $\rk$ & $T$ & pattern & non-encircled & $\rk$ & $T$\\
\midrule
$(0,1,0)$ & --- & $0$ & $2\,3/3$      & $(1,1,1)$ & --- & $0$ & $1\,3/2$\\
$(0,1,1)$ & --- & $0$ & $1\,3/3$      & $(1,1,1)$ & $y_{11}$ & $1$ & $1\,3/23$\\
$(0,1,1)$ & $y_{21}$ & $1$ & $12\,3/3$ & $(1,2,1)$ & --- & $0$ & $1\,2/2$\\
$(0,2,0)$ & --- & $0$ & $2\,2/3$      & $(1,2,1)$ & $y_{11}$ & $1$ & $1\,2/23$\\
$(0,2,0)$ & $y_{12}$ & $1$ & $2\,23/3$ & $(1,2,1)$ & $y_{12}$ & $1$ & $1\,23/2$\\
$(0,2,1)$ & --- & $0$ & $1\,2/3$      & $(1,2,1)$ & $y_{11},y_{12}$ & $2$ & $1\,23/23$\\
$(0,2,1)$ & $y_{12}$ & $1$ & $1\,23/3$ & $(1,2,2)$ & --- & $0$ & $1\,1/2$\\
$(0,2,1)$ & $y_{21}$ & $1$ & $12\,2/3$ & $(1,2,2)$ & $y_{11}$ & $1$ & $1\,1/23$\\
$(0,2,1)$ & $y_{12},y_{21}$ & $2$ & $12\,23/3$ & $(1,2,2)$ & $y_{12}$ & $1$ & $1\,13/2$\\
$(0,2,2)$ & --- & $0$ & $1\,1/3$      & $(1,2,2)$ & $y_{21}$ & $1$ & $1\,12/2$\\
$(0,2,2)$ & $y_{12}$ & $1$ & $1\,13/3$ & $(1,2,2)$ & $y_{11},y_{12}$ & $2$ & $1\,13/23$\\
$(0,2,2)$ & $y_{21}$ & $1$ & $1\,12/3$ & $(1,2,2)$ & $y_{11},y_{21}$ & $2$ & $1\,12/23$\\
$(0,2,2)$ & $y_{12},y_{21}$ & $2$ & $1\,123/3$ & $(1,2,2)$ & $y_{12},y_{21}$ & $2$ & $1\,123/2$\\
 & & & & $(1,2,2)$ & $y_{11},y_{12},y_{21}$ & $3$ & $1\,123/23$\\
\bottomrule
\end{tabular}
\end{center}

\begin{example}\label{ex:full}
Take the last line. For $T=1\,123/23$ the minimum tableau is $1\,1/2$, so $\nu^{(1)}=(2,0)$ and
$\nu^{(2)}=(2,1)$, giving $(a_{11},a_{12},a_{21})=(1,2,2)$. The non-minimal elements are $2$ and $3$
in the cell $(1,2)$ and $3$ in the cell $(2,1)$; the excessive pairs are $(1,1)$, $(2,1)$ and
$(2,2)$, corresponding to all three positions, so no entry is encircled and $\rk=3$. By
Construction~\ref{constr:cells} the associated cell is the interior of the polytope. Conversely,
$\ex(T)=3$ and $\wt(T)=(2,2,2)$, in agreement with $x^P=\beta^3x_1^2x_2^2x_3^2$.
\end{example}

\begin{example}\label{ex:cells}
Two cells that will be needed in Section~\ref{ss:counterexample}. For $T=1\,23/23$ the pattern is
$(1,2,1)$ with $y_{11}$ and $y_{12}$ not encircled, so the entry $a_{21}=1$ is encircled and, equal
to its upper-left neighbor $a_{11}=1$ but not to its upper-right neighbor $a_{12}=2$, is joined to
the left. Construction~\ref{constr:cells} imposes $y_{21}=y_{11}$, together with $0<y_{11}<1$ and
$1<y_{12}<2$, so
\[
C=\bigl\{(u,t,u)\ :\ 0<u<1,\ 1<t<2\bigr\},
\qquad \dim C=2=\ex(T).
\]
For $T=1\,23/3$ the pattern is $(0,2,1)$ with only $y_{12}$ not encircled. Here $a_{11}=0$ is joined
to its upper-left neighbor $a_{0,1}=0$, giving $y_{11}=0$, while $a_{21}=1$ equals neither
$a_{11}=0$ nor $a_{12}=2$ and so carries no edge, giving $y_{21}=1$. Hence
\[
C=\bigl\{(0,t,1)\ :\ 1<t<2\bigr\},
\qquad \dim C=1=\ex(T).
\]
\end{example}


\section{The square-root crystal and the face poset}
\label{sec:crystal}

The bijection $\widetilde\Phi$ transports to $\cP^+(\lambda)$, and hence to the cells of $\cC$, the
crystal structure that Yu puts on $\SVT_n(\lambda)$. It is natural to ask whether the transported
structure is compatible with the geometry of $\GZ(\lambda)$ --- an expectation voiced at the end of
the introduction of~\cite{PresnovaSmirnov24}. In this section we prove that one such compatibility
does hold (Theorem~\ref{thm:altern}) and that the most natural further one does not
(Theorem~\ref{thm:counter}).

We first recall the operators. Fix $i\in[n-1]$ and $T\in\SVT_n(\lambda)$, and read the cells of
$\lambda$ in \emph{column order}: from the leftmost column to the rightmost, and within each column
from the bottom row to the top. Each cell contributes to a word: a symbol \verb")" if it contains $i$ but
not $i+1$, a symbol \verb"(" if it contains $i+1$ but not $i$, the string \verb")-(" if it contains both,
and nothing otherwise. Matching brackets in the usual way, ignoring the dashes, partitions this word
into \emph{classes}; a class is of \emph{right form} if it begins with an unmatched \verb")" and does not
end with an unmatched \verb"(", of \emph{left form} in the mirror situation, and of \emph{combined form}
if both. The operator $f'_i$ acts as follows: if a class of combined form occurs, delete $i$ from
the cell contributing the initial \verb")" of the first such class; otherwise add $i+1$ to the cell
contributing the final \verb")" of the last class of right form, and set $f'_i(T)=0$ if there is none.
The operator $e'_i$ is defined symmetrically. We refer to~\cite[\S4]{Yu21} for the precise
formulation and for the proof that these are well defined; the point for us is that $(f'_i)^2=f_i$
and $(e'_i)^2=e_i$ are the usual crystal operators. A \emph{double $i$-string} is a maximal sequence
$T_0,\dots,T_{2k}$ with $e'_i(T_0)=f'_i(T_{2k})=0$ and $f'_i(T_j)=T_{j+1}$; the set
$\SVT_n(\lambda)$ is the disjoint union of its double $i$-strings.

Throughout this section we abbreviate $C_T:=C_{\widetilde\Phi(T)}$.

\subsection{Dimension alternation along a double \texorpdfstring{$i$}{i}-string}
\label{ss:alternation}

\begin{theorem}\label{thm:altern}
Let $T_0,\dots,T_{2k}$ be a double $i$-string. Then
\[
\dim C_{2j+1}=\dim C_{2j}+1=\dim C_{2j+2}+1
\qquad (0\le j\le k-1),
\]
where $C_j:=C_{T_j}$; equivalently $\dim C_{T_j}=\ex(T_0)+(j\bmod2)$ for all $j$. In particular a
double $i$-string has odd length, and its cells alternate between two consecutive dimensions.
\end{theorem}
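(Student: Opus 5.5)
By Theorem~\ref{thm:surj} we have $\dim C_T=\rk\widetilde\Phi(T)=\ex(T)$. So the statement reduces to a purely tableau-theoretic claim about Yu's operators: along a double $i$-string the excess alternates as $\ex(T_0),\ex(T_0)+1,\ex(T_0),\dots$. The plan is to prove this from the bracket description of $f'_i$ recalled above.

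\textbf{Step 1: each application of $f'_i$ changes $\ex$ by exactly one.} The operator $f'_i$ either deletes an $i$ from a cell containing both $i$ and $i+1$ (the combined-form case), or adds $i+1$ to a cell containing $i$ but not $i+1$ (the right-form case). Deleting lowers the excess by one, since the cell stays non-empty. Adding raises it by one. So $\ex(f'_i T)=\ex(T)\pm1$, and the sign is $-$ exactly when a class of combined form is present.

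\textbf{Step 2: the two cases alternate along a string.} This is the main obstacle. We must show that if $T$ has no combined-form class but $f'_i(T)\neq0$, then $f'_i(T)$ has a combined-form class. We must also show the converse: after a deletion, the result has no combined-form class.

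For the first claim, consider the cell receiving $i+1$. It contributed the final unmatched \verb")" of the last right-form class. After the addition it contributes \verb")-(", and its \verb")" stays unmatched. The new \verb"(" is unmatched too, because everything to its right within the old class was already matched or not \verb")". So the class containing this cell now begins with an unmatched \verb")" and ends with an unmatched \verb"(", which is combined form. For the converse, deleting $i$ turns \verb")-(" into \verb"(". I would check that the first combined class loses its only initial unmatched \verb")" and so becomes left form, and that no later class becomes combined. The cleanest route is probably to cite~\cite[\S4]{Yu21}: there $f'_i$ is built so that $f'_i\circ f'_i=f_i$, where $f_i$ changes one $i$ into $i+1$ with the same total excess. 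Since Step~1 gives $\ex(f'_i{}^2T)=\ex(T)+(\pm1)+(\pm1)$, the equality $\ex(f_iT)=\ex(T)$ forces the two signs to be opposite. The claim $\ex(f_iT)=\ex(T)$ holds because $f_i$ on set-valued tableaux substitutes letters and preserves cell sizes; this is the standard bracket rule, which I would verify from Yu's definition.

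\textbf{Step 3: the first step of a string is an addition.} If $e'_i(T_0)=0$, then $T_0$ has no combined-form class. Otherwise, by the symmetric description of $e'_i$, $e'_i$ would act by re-adding $i$ to that class. From there I would cite~\cite{Yu21} that $T_0$ is the head of its double string. By Step~2 the signs then alternate $+,-,+,\dots$, giving $\ex(T_j)=\ex(T_0)+(j\bmod2)$.

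Finally, the string has odd length. The sequence $f'_i$ sends $T_{2k}$ to $0$, and since $e'_i(T_0)=0$ the $f_i$-string through $T_0$ is $T_0,T_2,\dots,T_{2k}$. Its length is even in $f'$-steps, because each $f_i=f'_i{}^2$. So the last step ending at $T_{2k}$ is a deletion, consistent with the count $2k+1$.
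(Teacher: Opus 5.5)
Your proof is correct in substance, but it takes a different route from the paper. The paper quotes \cite[Lemma~4.22]{Yu21}, which states directly that along a double $i$-string $f'_i$ alternately adjoins $i+1$ and deletes $i$, and then applies Theorem~\ref{thm:surj}. You instead re-derive that alternation from facts the paper itself recalls. Each $f'_i$ adds or deletes one letter, so $\ex$ changes by $\pm1$. Since $(f'_i)^2=f_i$ shifts $\wt$ by $v_{i+1}-v_i$, any two consecutive steps must have opposite signs. Since the mirrored $e'_i$ is nonzero whenever a combined class exists, $e'_i(T_0)=0$ forces the first step to be an addition. Your route shows that the alternation is forced by the square-root property alone, and only the starting sign needs the definitions; the paper's citation is shorter but hides this.

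A few details need repair.
\begin{itemize}
\item In your direct sketch for Step~2, the final \texttt{)} of the last right-form class can be matched. For $T_0=1\,2/23$ and $i=2$ it pairs with the \texttt{(} of cell $(2,1)$. Drop that sketch and rely on the squaring argument.
\item The equality $\ex(f_iT)=\ex(T)$ holds because $\wt(f_iT)=\wt(T)-v_i+v_{i+1}$, not because $f_i$ preserves cell sizes. That claim is false: for $\lambda=(2)$ and $n=2$ one has $f_1(1\,12)=12\,2$.
\item On a combined class the mirrored $e'_i$ deletes $i+1$ rather than re-adding $i$; for instance $e'_2(1\,23/23)=1\,2/23$. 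Only its being nonzero matters, so your conclusion stands.
\item Your odd-length paragraph is circular, since it presupposes that the string stops at an even index. This is harmless: odd length is built into the definition of a double $i$-string as $T_0,\dots,T_{2k}$, and the paper also takes it from Yu.
\end{itemize}
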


\begin{proof}
By~\cite[Lemma~4.22]{Yu21} the weights along a double $i$-string satisfy
$\wt(T_{2j+1})=\wt(T_{2j})+v_{i+1}$ and $\wt(T_{2j+2})=\wt(T_{2j+1})-v_i$, where $v_m$ is the $m$-th
standard basis vector; that is, $f'_i$ alternately adjoins the letter $i+1$ and deletes the letter
$i$. Hence $\ex(T_{2j+1})=\ex(T_{2j})+1$ and $\ex(T_{2j+2})=\ex(T_{2j+1})-1$. By
Theorem~\ref{thm:surj} we have $\dim C_{T}=\ex(T)$ for every $T$.
\end{proof}

\begin{corollary}\label{cor:altern}
The operator $f'_i$ never maps a cell to a cell of the same dimension, whereas the ordinary crystal
operators $f_i=(f'_i)^2$ and $e_i=(e'_i)^2$ preserve the dimension of the cell.
\end{corollary}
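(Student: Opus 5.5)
The plan is to read both assertions off Theorem~\ref{thm:altern} and the identity $\dim C_T=\ex(T)$ of Theorem~\ref{thm:surj}, with no geometry required.

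\emph{The first assertion.} Let $T$ satisfy $f'_i(T)\neq0$. Then $T$ and $f'_i(T)$ are consecutive members $T_j$, $T_{j+1}$ of the double $i$-string through $T$, since $\SVT_n(\lambda)$ is the disjoint union of such strings. Theorem~\ref{thm:altern} gives
\[
\dim C_{T_j}=\ex(T_0)+(j\bmod 2),
\]
and $j$ and $j+1$ have different parities. Hence $\dim C_{f'_i(T)}=\dim C_T\pm1$, and in particular the two dimensions are never equal.

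\emph{The second assertion.} If $f_i(T)=(f'_i)^2(T)\neq0$, then $T=T_j$ and $f_i(T)=T_{j+2}$ lie in the same double $i$-string. Their indices have the same parity, so the same formula gives $\dim C_{f_i(T)}=\dim C_T$.

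The argument for $e_i=(e'_i)^2$ is symmetric. If $e'_i(T_j)\neq0$, then $e'_i(T_j)=T_{j-1}$, because $e'_i$ and $f'_i$ are mutually inverse partial maps in Yu's crystal. So $e_i$ moves two steps backward along the string, and again preserves parity and hence dimension.

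\emph{Main obstacle.} The only point needing care is this inverse relation between $e'_i$ and $f'_i$, that is, the claim that $e'_i$ moves backward along the same string. It is part of the abstract crystal axioms cited from \cite{Yu21}. Failing that, one can instead use the weight statement directly: $e'_i$ alternately adds $i$ and removes $i+1$, so $e'_i$ changes $\ex$ by exactly $\pm1$ at each step, with the signs alternating. It follows that $e_i$ preserves $\ex$, and therefore preserves dimension.
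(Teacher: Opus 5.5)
Your proposal is correct and is the argument the paper has in mind; the corollary is stated without proof as an immediate consequence of Theorem~\ref{thm:altern}. The dimension along a double $i$-string is $\ex(T_0)+(j\bmod 2)$, via $\dim C_T=\ex(T)$ from Theorem~\ref{thm:surj}, so $f'_i$ flips the parity of the index while $f_i$ and $e_i$ preserve it. Treating $e'_i$ as the partial inverse of $f'_i$ is already implicit in the paper's definition of double $i$-strings and in their disjoint-union decomposition, so your fallback argument via weights is unnecessary.
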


\subsection{Consecutive cells need not be incident}
\label{ss:counterexample}

Theorem~\ref{thm:altern} makes the following question natural, and the first examples encourage it.
For instance, in the double $2$-string
\[
1\,2/2\ \to\ 1\,23/2\ \to\ 1\,3/2\ \to\ 1\,3/23\ \to\ 1\,3/3
\]
for $\lambda=(2,1,0)$ the five cells are, in order, the point $(1,2,1)$; the open segment
$y_{11}=y_{21}=1$, $1<y_{12}<2$; the point $(1,1,1)$; the open segment $0<y_{11}<1$,
$y_{12}=y_{21}=1$; and the point $(0,1,1)$. Each segment has the two neighboring points as its
endpoints, so this double $2$-string is a zigzag path in the $1$-skeleton of $\cC$.

\begin{question}\label{q:facet}
Is $C_{2j}$, respectively $C_{2j+2}$, always a face of $C_{2j+1}$, so that a double $i$-string is a
zigzag path in the Hasse diagram of the face poset of $\cC$?
\end{question}

\begin{theorem}\label{thm:counter}
The answer is negative. For $\lambda=(2,1,0)$, $n=3$ and $i=2$, consider the double $2$-string
\[
T_0=1\,2/23\ \xrightarrow{\ f'_2\ }\ T_1=1\,23/23\ \xrightarrow{\ f'_2\ }\ T_2=1\,23/3 .
\]
Then $\overline{C_{T_1}}\cap\overline{C_{T_2}}=\varnothing$; in particular $C_{T_2}$ is not a face of
$C_{T_1}$.
\end{theorem}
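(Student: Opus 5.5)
The plan has two parts. First, check from the bracket rule recalled above that the three tableaux form a double $2$-string. Second, take the two cells from Example~\ref{ex:cells}, close them up, and show that the closures are disjoint.

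\emph{Step 1: the string.} Fix $i=2$ and read the cells of shape $(2,1)$ in column order $(2,1),(1,1),(1,2)$.
\begin{itemize}
\item For $T_0=1\,2/23$ the word is \verb")-(" from $(2,1)$, nothing from $(1,1)$, and \verb")" from $(1,2)$. The \verb"(" is matched with the final \verb")", and the class starts with an unmatched \verb")", so it is of right form. It does not end with an unmatched \verb"(", so it is not of combined form. Hence $f'_2$ adjoins $3$ to the cell $(1,2)$ contributing the final \verb")", which gives $T_1=1\,23/23$.
\item For $T_1$ the word is \verb")-(" \verb")-(". It begins with an unmatched \verb")" and ends with an unmatched \verb"(", so the class is of combined form. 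Hence $f'_2$ deletes $2$ from the cell $(2,1)$ contributing the initial \verb")", which gives $T_2=1\,23/3$.
\item For $T_2$ the word is \verb"(" \verb")-(". The first \verb"(" is matched and the last one is unmatched, so no class is of right form and none is of combined form. Thus $f'_2(T_2)=0$.
\end{itemize}
The same reading with the symmetric rule for $e'_2$ shows that $e'_2(T_0)=0$; the details are routine. Alternatively, $T_0$ is the only element of its $f_2=(f'_2)^2$-string with weight $(1,2,1)$, and that string has length one, which forces maximality. Strictly speaking, maximality is not needed for the disjointness claim; only $f'_2(T_0)=T_1$ and $f'_2(T_1)=T_2$ matter.

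\emph{Step 2: the cells.} By Theorem~\ref{thm:surj} and the computation in Example~\ref{ex:cells}, using Construction~\ref{constr:cells} on $\widetilde\Phi(T_1)$ and $\widetilde\Phi(T_2)$,
\[
C_{T_1}=\bigl\{(u,t,u):0<u<1,\ 1<t<2\bigr\},\qquad C_{T_2}=\bigl\{(0,t,1):1<t<2\bigr\}.
\]
Their closures are therefore
\[
\overline{C_{T_1}}=\bigl\{(u,t,u):0\le u\le1,\ 1\le t\le2\bigr\},\qquad \overline{C_{T_2}}=\bigl\{(0,t,1):1\le t\le 2\bigr\}.
\]
A common point would have to satisfy $y_{11}=y_{21}$, $y_{11}=0$ and $y_{21}=1$ at the same time, which is impossible. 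So $\overline{C_{T_1}}\cap\overline{C_{T_2}}=\varnothing$. In particular $C_{T_2}\not\subseteq\overline{C_{T_1}}$, so $C_{T_2}$ is not a face of $C_{T_1}$.

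The only step needing care is Step 1, namely reading the bracket classes correctly and in particular distinguishing right form from combined form for $T_0$ and $T_1$. Step 2 is immediate once the cells have been identified in Example~\ref{ex:cells}.
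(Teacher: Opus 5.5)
Your proof is correct and follows the paper's argument: you check the string with the bracket rule exactly as the paper does, then take the cells from Example~\ref{ex:cells} and note that their closures satisfy the incompatible conditions $y_{21}=y_{11}$ on one side and $y_{11}=0$, $y_{21}=1$ on the other. Your remark that only $f'_2(T_0)=T_1$ and $f'_2(T_1)=T_2$ matter is right, since disjoint closures rule out incidence in either direction; this makes your loosely argued alternative via the length of the $f_2$-string unnecessary, and the direct check $e'_2(T_0)=0$ is indeed routine.
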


\begin{proof}
We first check the string. In column order the cells are $T(2,1),T(1,1),T(1,2)$. For
$T_0=1\,2/23$ the sets are $\{1\},\{2\},\{2,3\}$, so the $2$-word is \verb")-()": a single class,
which begins with an unmatched \verb")" and ends with an unmatched \verb")", hence of right form and not
combined. Therefore $f'_2$ adjoins the letter $3$ to the cell contributing the final \verb")", namely
$T_0(1,2)=\{2\}$, giving $T_1=1\,23/23$. The $2$-word of $T_1$ is \verb")-()-(", a single class of
combined form, so $f'_2$ deletes the letter $2$ from the cell contributing the initial \verb")", namely
$T_1(2,1)=\{2,3\}$, giving $T_2=1\,23/3$. Finally the $2$-word of $T_2$ is \verb"()-(", of left form,
so $f'_2(T_2)=0$ and the string terminates.

The two cells were computed in Example~\ref{ex:cells}:
\[
C_{T_1}=\bigl\{(u,t,u):0<u<1,\ 1<t<2\bigr\},
\qquad
C_{T_2}=\bigl\{(0,t,1):1<t<2\bigr\},
\]
of dimensions $2$ and $1$ respectively, in accordance with $\ex(T_1)=2$ and $\ex(T_2)=1$. Every
point of $\overline{C_{T_1}}$ satisfies $y_{21}=y_{11}$, whereas every point of
$\overline{C_{T_2}}$ satisfies $y_{11}=0$ and $y_{21}=1$. The two closures are therefore disjoint:
the cells are not merely non-incident, they are not even adjacent.
\end{proof}

\begin{remark}
The failure is not an artefact of the tie-breaking rule in the all-equal branch of $\cR$: neither of
the two patterns occurring in the proof contains an all-equal triangle, so no tie-break is involved.
\end{remark}

\subsection{Open questions}
\label{ss:questions}

We collect the questions left open, in decreasing order of how close they seem to the results
proved above.

The first concerns the operators themselves. Yu's $f'_i$ is defined by reading a word off the
tableau, and the description we transported to $\cP^+(\lambda)$ inherits that detour: to apply
$f'_i$ to a pattern one passes to $\widetilde\Phi^{-1}$ of it, acts, and comes back.

\begin{question}\label{q:intrinsic}
Is there an intrinsic description of $f'_i$, for general $i$, in terms of the enhanced pattern or of
the cell $C_P$ --- one that does not pass through set-valued tableaux? Equivalently, is there a
description of the square-root crystal structure that is visibly compatible with the subdivision
$\cC$?
\end{question}

The obstruction is what Section~\ref{ss:counterexample} exhibits: for $i\ge2$ the operator can move
between cells that are not even adjacent, so no local rule on the face poset can produce it. This
suggests weakening what one asks of the geometry.

\begin{question}\label{q:better}
Is there a cellular decomposition of $\GZ(\lambda)$, still indexing the monomials of
$G^{(\beta)}_\lambda$ with the multiplicities of \eqref{eq:ps}, on which the operators $f'_i$ act by
passing to a face or a coface? A candidate would have to differ from $\cC$: by
Theorem~\ref{thm:counter} the decomposition of~\cite{PresnovaSmirnov24} does not have this property.
\end{question}

The remaining questions concern the bijection rather than the crystal. The most substantial is
whether $\widetilde\Phi$ sees Lascoux polynomials and not merely Grassmannian Grothendieck
polynomials. Both of the rules being compared here have such a refinement. On the tableau side, Yu's
theorem expresses a Lascoux polynomial as a sum over those set-valued tableaux of shape $\lambda$
whose right key is bounded by a fixed key~\cite{Yu21}. On the polytope side,
\cite[Theorem~4.3.3]{PresnovaSmirnov24} expresses it as
\[
\cL_{w,\lambda}=\sum_{P\in\cP^+(w,\lambda)}x^P,
\]
where $\cP^+(w,\lambda)$ consists of the efficient patterns whose cells lie in the union of the dual
Kogan faces of $\GZ(\lambda)$ attached to the permutation $w$. Since both sums compute the same
polynomial and $\widetilde\Phi$ matches monomials term by term by Theorem~\ref{thm:surj}, the two
subsets carry the same multiset of monomials; but that is a numerical coincidence and does not by
itself relate the subsets.

\begin{question}\label{q:lascoux}
Does $\widetilde\Phi$ carry the set of tableaux selected by the right-key condition for $w$ onto
$\cP^+(w,\lambda)$? Equivalently, is the right key of $T$ computable from the position of the cell
$C_{\widetilde\Phi(T)}$ relative to the dual Kogan faces of $\GZ(\lambda)$?
\end{question}

We strongly believe that the answer is affirmative; the details will appear
elsewhere~\cite{SmirnovKeys}. This will
upgrade Corollary~\ref{cor:equiv} from an equivalence between two formulas for $G^{(\beta)}_\lambda$
to an equivalence between two rules for all Lascoux polynomials, and will identify the right key
--- a construction of a combinatorial nature --- with a geometric position inside the polytope.

Finally, two smaller matters. The bijection is defined on $\cP^+(\lambda)$, and inefficient patterns
are simply discarded; under Corollary~\ref{cor:whyeff} they correspond to data that would-be
tableaux cannot carry, the prescribed excess letter colliding with the cell below its host.

\begin{question}\label{q:inefficient}
Is there a combinatorial model for the inefficient patterns themselves, compatible with
$\widetilde\Phi$ --- for instance one in which the cells of $\cC$ that carry no monomial are
indexed by fillings violating column-strictness in a controlled way? What is the topology of the
union of the inefficient cells?
\end{question}

\begin{question}\label{q:othertypes}
Gelfand--Zetlin polytopes have analogues for the other classical groups, and Grothendieck
polynomials have $K$-theoretic analogues in those settings. Is there a counterpart of
Theorem~\ref{thm:surj} there --- a subdivision of the relevant string or
Berenstein--Zelevinsky polytope whose cells are indexed by an appropriate notion of set-valued
tableau?
\end{question}

\providecommand{\bysame}{\leavevmode\hbox to3em{\hrulefill}\thinspace}
\providecommand{\MR}{\relax\ifhmode\unskip\space\fi MR }
\providecommand{\MRhref}[2]{%
  \href{http://www.ams.org/mathscinet-getitem?mr=#1}{#2}
}
\providecommand{\href}[2]{#2}

\end{document}